\def\tto{\;{\lower 1pt \hbox{$\rightarrow$}}\kern -10pt
\hbox{\raise 2pt \hbox{$\rightarrow$}}\;}
\newtheorem{Theorem}{Theorem}[section]
\newtheorem{Proposition}[Theorem]{Proposition}
\newtheorem{Lemma}[Theorem]{Lemma}
\newtheorem{Corollary}[Theorem]{Corollary}
\theoremstyle{definition}
\theoremstyle{remark}
\newtheorem{Remark}[Theorem]{Remark}
\newtheorem{Example}[Theorem]{Example}
\newtheorem{question}[Theorem]{Question}
\numberwithin{equation}{section}
\begin{document}

\title[Super asymptotically nonexpan\-sive actions on Fr\'{e}chet spaces]{Super asymptotically nonexpan\-sive actions of semitopological semigroups on Fr\'{e}chet and locally convex spaces}

\author[Muoi and Wong]{Bui Ngoc Muoi \and {Ngai-Ching Wong}}

\address{Department of Applied Mathematics, National Sun Yat-sen University, Kaohsiung, 80424, Taiwan.}
\email{muoisp2@gmail.com, wong@math.nsysu.edu.tw}


\thanks{Corresponding author: Ngai-Ching Wong, E-mail: wong@math.nsysu.edu.tw}
\date{\today}
\dedicatory{Dedicated to Professor Wataru Takahashi on the occasion of  his 75th birthday}

\begin{abstract}
Let  $\operatorname{LUC}(S)$ be the space of left uniformly continuous functions on
a  semitopological semigroup $S$.
Suppose that $S$ is right reversible and  $\operatorname{LUC}(S)$ has a left invariant mean.
Let $(X,d)$ be a Fr\'{e}chet space. Let $\tau$ be a locally convex topology of $X$ weaker than the $d$-topology such
that the metric $d$ is $\tau$-lower semicontinuous.
Let $K$ be a $d$--separable and $\tau$--compact convex subset of $X$.
We show that every jointly  $\tau$-continuous and super asymptotically $d$--nonexpan\-sive action $S\times K\mapsto K$ of $S$ has
 a common fixed point.
Similar results in the locally convex space setting are provided.
\end{abstract}

\keywords{Semitopological semigroups, left invariant means, reversible semigroups, super asymptotically nonexpan\-sive actions,
fixed points, Fr\'{e}chet space.}

\subjclass[2010]{Primary 47H10; Secondary 47H20, 47H09.}

\date{}

\maketitle

\section{Introduction}

Let $S$ be a \emph{semitopological semigroup}, i.e., $S$ is a semigroup with a (Hausdorff) topology such that for each fixed $t\in S$, the mappings $s\mapsto ts$ and $s\mapsto st$ are continuous.
An \emph{action} of $S$ on a  topological space $K$ is a mapping of $S\times K$ into $K$, denoted by
$(s,x) \mapsto s.x$, such that $(st).x=s.(t.x)$ for all $s, t\in S$ and $x\in K$.  We call the action  \emph{separately} (resp.\ \emph{jointly})
\emph{continuous} if the mapping $(s, x)\mapsto s.x$ is separately (resp.\ jointly) continuous. A point $x_0\in K$ is called
a \emph{common fixed point} for $S$ if $s.x_0=x_0$ for all $s\in S$.

In 1976, Karlovitz \cite[page 322]{Karlov76} proved that if $K$ is a weak* compact convex subset of $l_1$ (which is the Banach dual of $c_0$) then
every nonexpan\-sive mapping from $K$ to itself has a fixed point. Indeed, a commutative family of weak* continuous nonexpan\-sive mappings on a weak*
compact convex subset of a Banach dual space always attains a common fixed point (see \cite[Theorem 3.5]{BW2014}).
However, an affine nonexpan\-sive mapping on a weak* compact convex subset of a general Banach dual space does not always have a fixed point  (see
e.g. \cite[Example 3.2]{Sims01}).  This indicates that we need, in general, both the topological and
the uniform properties  to ensure the existence of a  fixed point.

Observe that every (Hausdorff) locally convex space carries a uniform structure defined by continuous seminorms.
More precisely, we write $(X,Q)$
for a locally convex space $X$ with  a family $Q$
of selected seminorms defining its topology.
Extending from the Banach space version (see \cite{MuoiWong2020}) we say that
an action of a semitopological semigroup $S$ on a subset $K$ of  $(X, Q)$ is
\begin{itemize}\itemsep=6pt
	\item \textit{asymptotically uniformly $Q$--nonexpan\-sive} (see  \cite{HolLau71}) if for    $x, y\in K$ there exists a left ideal $I_{xy}$ of $S$ such that $q(s.x-s.y)\leq q(x-y), \forall s\in I_{xy},\,
	 \forall q\in Q$;

	\item \textit{asymptotically separately $Q$--nonexpan\-sive}  if for    $x, y\in K$   and
	   $q\in Q$  there exists a left ideal $I^q_{xy}$ of $S$ such that $q(s.x-s.y)\leq q(x-y), \forall s\in I^q_{xy}$;

	\item \textit{super asymptotically uniformly $Q$--nonexpan\-sive} if for   $x\in K$ and    $t\in S$ there exists a left ideal $I^t_x$ of $S$ such that $q(st.x-st.y)\leq q(x-y), \forall s\in I^t_{x},\ \forall y\in K,\,
	 \forall q\in Q$;

	\item \textit{super asymptotically separately $Q$--nonexpan\-sive} if for     $x\in K$,   $t\in S$  and
	   $q\in Q$ there exists a left ideal $I^{q,t}_{x}$  of $S$ such that $q(st.x-st.y)\leq q(x-y), \forall s\in I^{q,t}_{x},\,
\forall y\in K$.
\end{itemize}
Note that these uniformity notions depend on the seminorms in $Q$ but not the topology $Q$ defining.

 When  $K$ is a  metric space with metric $d$, we call the action
\begin{itemize}\itemsep=6pt
\item  \textit{asymptotically $d$--nonexpan\-sive} (see \cite{HolNar70}) if for each $x,y\in K$, there exists a left ideal $I_{xy}$ of $S$ such that $d(s.x,s.y)\leq d(x,y)$ for all  $s\in I_{xy}$;
	\item  \textit{super asymptotically $d$--nonexpan\-sive} if for each $x\in K$ and $t\in S$, there exists a left ideal $I^t_x$ of $S$ such that $d(st.x,st.y)\leq d(x,y)$ for all $y\in K$ and $s\in I^t_x$.
	\end{itemize}

Let $X$ be a metrizable locally convex space.
Let $Q=\{q_n: n=1,2,\ldots\}$ be a countable family of seminorms defining the topology of
 $X$, and define a metric
\begin{equation}\label{sumMetric}
d(x,y)=\sum_{n=1}^{\infty}\frac{1}{2^n}\frac{q_n(x-y)}{1+q_n(x-y)}.
\end{equation}
It is easy to see that every asymptotically (resp.\ super asymptotically) uniformly
$Q$--nonexpan\-sive action  on a subset $K$ of $X$ is asymptotically (resp.\ super asymptotically)
$d$--nonexpan\-sive.  Conversely, let $d$ be a translation invariant metric defining the topology
of  $X$ such that all open metric balls are absolutely convex.  As shown in Remark \ref{rem:d->unifQ},
there is a family $Q$ of continuous seminorms defining the metric topology of
$X$ such that   asymptotically (resp.\ super asymptotically)
$d$--nonexpan\-sive actions  on a subset $K$ of $X$ are exactly those being asymptotically (resp.\ super asymptotically)
uniformly $Q$--nonexpan\-sive.  Consequently, the fixed point theory for various uniformly $Q$-nonexpan\-sive actions
is equivalent to that for the $d$-nonexpan\-sive ones (see Corollary \ref{cor:unif-Q}).

In Section \ref{Sect2}, we consider the case the topological structure and the uniform structure of $K$  arising from
different context.  More precisely, let $(X,\tau)$ be a locally convex space.  Assume in addition that
 there is a  translation invariant  metric $d$ on $X$ making
$(X,d)$   a Fr\'{e}chet space.
Moreover, the $\tau$-topology is weaker than the $d$-topology
and $d$ is $\tau$-lower semicontinuous.  Following \cite{LauTaka09},
we call $\tau$ a \emph{$d$--admissible} topology in this case.
For example, the weak (resp.\ weak*) topology of a Banach space (resp. Banach dual space) is admissible with respect to the
norm metric.

Let $K$ be a $d$--separable and $\tau$--compact convex subset of  $X$.
We show in Theorem \ref{mainThm}, among other results, that if $S$ is a right reversible semitopological semigroup such that
$\operatorname{LUC}(S)$  has  a  left invariant mean (for definitions see Section \ref{Sect2}),
then every jointly $\tau$--continuous and super asymptotically $d$--nonexpan\-sive action $S\times K\mapsto K$ of $S$ on  $K$   has a common
fixed point. This extends earlier results of the authors \cite{MuoiWong2020}, and also supplements other results in literature, e.g., \cite{AMN2016,KidoTaka84,LauTaka95,Taka81}, from the Banach space setting to
the Fr\'{e}chet space setting.

In Section \ref{Sect3},  we  provide some fixed point theorems for super asymptotically separately $Q$--nonexpan\-sive actions on a
locally convex space $(X,Q)$.  In this setting,  both the uniform structure and the topological structure arise from the
same family $Q$
of continuous seminorms of $X$.
To end this paper, we provide examples to demonstrate that the notions of
asymptotic nonexpan\-siveness and super asymptotic nonexpan\-siveness can be different,
while they coincide for separately continuous actions of right reversible compact semigroups.

\section{Asymptotically nonexpansive actions on Fr\'{e}chet spaces}\label{Sect2}

Let $S$ be a semitopological semigroup throughout.
We say that  $S$ is \textit{right (resp.\ left) reversible} if any two closed left (resp.\ right) ideals of $S$ intersect.
Denote by $l^{\infty}(S)$ (resp.\ $\operatorname{CB}(S)$) the Banach space of   bounded
(resp.\ bounded and continuous) real-valued functions on $S$ with the supremum norm.
For each $s\in S$ and $f\in l^{\infty}(S)$, we denote by $l_sf$ the left translation of $f$ by $s$
where $l_sf(t)=f(st)$ for all $t\in S$. A subspace $X\subseteq l^{\infty}(S)$ that contains all constant functions is called
\textit{left translation invariant} if $l_s(X)\subseteq X$ for all $s\in S$. A linear functional $m\in X^*$ is called a \emph{mean}
if $\|m\|=m(1)=1$.
A mean $m$ is called a \textit{left invariant mean}, or $\operatorname{LIM}$ in short,
if $m(l_sf)=m(f)$ for all $s\in S$ and $f\in X$.   We also have symmetric concepts about right invariant means.  An
\emph{invariant mean} of $X$
is a mean which is both left and right invariant.

Let $\operatorname{LUC}(S)$ be the space of   \textit{left uniformly continuous} functions on $S$; namely those $f$ for which
the map $s\mapsto l_sf$ from $S$ into $\operatorname{CB}(S)$ is norm continuous. The space of \textit{right uniformly continuous} functions $\operatorname{RUC(S)}$ is defined symmetrically.
When $S$ is topological group, $\operatorname{LUC(S)}$ and $\operatorname{RUC(S)}$ coincide (see e.g. \cite[Theorem 15.4]{ER97}).
Let $\operatorname{AP}(S)$ (resp.\ $\operatorname{WAP}(S)$) be the subspace of \textit{almost periodic}
(resp.\ \textit{weakly almost periodic}) functions  in $\operatorname{CB}(S)$;
namely those $f$ for which $\left\lbrace l_sf: s\in S\right\rbrace$ is relatively compact in the norm (resp.\ weak) topology of $\operatorname{CB}(S)$.
All $\operatorname{LUC}(S), \operatorname{AP}(S), \operatorname{WAP}(S)$ are left translation invariant subspaces
of $\operatorname{CB}(S)$. In general, we have
$$
\operatorname{AP}(S)\subseteq \operatorname{LUC}(S)\subseteq \operatorname{CB}(S)\quad\text{and}\quad
\operatorname{AP}(S)\subseteq \operatorname{WAP}(S)\subseteq \operatorname{CB}(S).
$$
When $S$ is compact, we have $\operatorname{AP}(S)=\operatorname{LUC}(S)\subseteq\operatorname{WAP}(S)=\operatorname{CB}(S)$ (see, e.g., \cite[page 2952]{LauZhang12}).

We recall that a
locally convex space $X$ is metrizable if and only if there is a countable family of seminorms
defining its topology.  In this case, we can choose a  ``\textit{good}'' metric $d$ to define the topology of $X$ such that
\begin{itemize}
	\item [(i)] $d(x+y,x+z)=d(y,z)$ for all $x, y, z$ in $X$,
	\item [(ii)] $B_r(0)=\left\lbrace x\in X: d(x,0)<r\right\rbrace$ is open and absolutely convex for all $r>0$.
\end{itemize}
The metric topology coincides with the topology defined by the countable family $Q$ of    seminorms
\begin{equation}\label{MinkowskiSeminorm}
q_r(x)=\inf\left\lbrace\lambda>0: x\in\lambda \bar{B}_r(0)\right\rbrace,\quad   r\in \mathbb{Q}\cap (0,+\infty),
\end{equation}
where $\bar{B}_r(0)=\{x\in X: d(x,0)\leq r\}$.
 In this case, we can recover the metric $d$ by
 \begin{equation}\label{eq:d=q_r}
 d(x,y)=\inf\left\lbrace r\in \mathbb{Q}\cap (0,+\infty): q_r(x-y)\leq 1\right\rbrace,
 \end{equation}
 when \eqref{sumMetric} gives rise to an equivalent metric instead (see Rudin \cite[Theorem 1.24]{Rudin91}).
Clearly, the norm metric $d(x,y)=\|x-y\|$ of a Banach space is a ``good'' metric.

\begin{Remark}\label{rem:d->unifQ}
Let $d$ be a ``good'' metric of a Fr\'{e}chet space $X$, and   $Q$ be the countable family of seminorms defined by \eqref{MinkowskiSeminorm}.
Then  (resp.\ super) asymptotically $d$--nonexpan\-sive actions of a semigroup $S$ on a subset $K$ of $X$  are exactly those being
(resp.\ super) asymptotically uniformly $Q$--nonexpan\-sive.

Indeed, suppose for any $x\in K$ and $t\in S$ there exists a left ideal $I^t_x$ of $S$ such that
$$
d(st.x,st.y)\leq d(x,y),\quad\forall s\in I^t_x,\, \forall y\in K.
$$
This gives
$$
x-y\in\bar{B}_r(0)\quad\implies\quad st.x-st.y\in\bar{B}_r(0), \quad \forall r\in\mathbb{Q}\cap (0,+\infty).
$$
In other words,
$$
q_r(x-y)\leq 1\quad\implies\quad q_r(st.x-st.y)\leq 1, \quad\forall r\in\mathbb{Q}\cap (0,+\infty).
$$
Therefore,
\begin{align}\label{eq:Q-nonexp}
q_r(st.x-st.y)\leq q_r(x-y),\quad \forall s\in I^t_x,\, \forall y\in K,\, \forall q_r\in Q.
\end{align}
Conversely, it follows from \eqref{eq:d=q_r} and \eqref{eq:Q-nonexp} that any   super  asymptotically uniformly $Q$--nonexpan\-sive is
 super  asymptotically $d$--nonexpan\-sive.

The case for asymptotically $d$-nonexpansive actions is similar.
\end{Remark}

\begin{Theorem}\label{mainThm}
	Let $S$ be a right reversible semitopological semigroup such that
 $\operatorname{LUC}(S)$ has a left invariant mean. Let $(X,d)$ be a  Fr\'{e}chet space with a ``good'' metric $d$.
Let $\tau$ be a $d$--admissible locally convex topology of $X$.
Then $S$ has the following fixed point property.
	\begin{quote}
	 $\mathbf{(F^{\sup}_{jc,dsep})}$ Every  super asymptotically $d$--nonexpan\-sive and jointly $\tau$ continuous  action of  $S$
 on a $d$--separable and $\tau$--compact convex    subset $K$ of $X$  has a common fixed point.
	\end{quote}
\end{Theorem}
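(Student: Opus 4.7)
The plan is to adapt the Banach-space argument of \cite{MuoiWong2020} to the Fréchet setting by combining a Zorn-minimality reduction with a barycenter constructed via the left invariant mean on $\operatorname{LUC}(S)$, and finishing with an asymptotic-center argument that exploits right reversibility and super asymptotic $d$-nonexpansiveness. First, I would apply Zorn's lemma to the family of non-empty $\tau$-closed convex $S$-invariant subsets of $K$, ordered by reverse inclusion: this family is non-empty (it contains $K$), and every decreasing chain has a non-empty intersection by $\tau$-compactness of $K$, so a minimal element $C$ exists. Since $\tau$ is weaker than the $d$-topology, $C$ is $d$-closed, hence $d$-separable as a subset of $K$. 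The goal is then to show $C$ is a singleton.

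Next I would construct a barycenter lying in $C$. Fix $x_0\in C$ and let $X'$ denote the continuous dual of $(X,\tau)$. For each $f\in X'$, the orbital function $\varphi^f_{x_0}(s):=f(s.x_0)$ lies in $\operatorname{LUC}(S)$: joint $\tau$-continuity of the action together with $\tau$-compactness of $K$ gives, by a standard finite-cover argument, sup-norm continuity of the map $s\mapsto (y\mapsto f(s.y))$ from $S$ into $C(K)$, and evaluating this on the orbit $\{t.x_0:t\in S\}\subseteq K$ yields norm continuity of $s\mapsto l_s\varphi^f_{x_0}$. Applying the left invariant mean $\mu$ produces a bounded linear functional $\Lambda(f):=\mu(\varphi^f_{x_0})$ on $X'$. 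The standard barycenter theorem for Hausdorff locally convex spaces then gives a unique $z\in\overline{\mathrm{co}}^{\tau}(Sx_0)\subseteq C$ with $f(z)=\Lambda(f)$ for every $f\in X'$.

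Finally I would exploit super asymptotic $d$-nonexpansiveness to force $C=\{z\}$. Consider the radius function $r(y):=\sup_{u\in C}d(y,u)$ on $C$. Since $d$ is $\tau$-lower semicontinuous and $C$ is $\tau$-compact, $r$ is $\tau$-lower semicontinuous and attains its infimum $r_0$ on a non-empty $\tau$-closed set $A\subseteq C$; by the ``good'' metric property (balls absolutely convex) $A$ is convex. The crucial step, and what I expect to be the main obstacle, is to show $A$ is $S$-invariant. For $y\in A$ and $t\in S$, the super asymptotic hypothesis furnishes a left ideal $I^t_y\subseteq S$ with $d(st.y,st.u)\le d(y,u)\le r_0$ for all $u\in C$ and $s\in I^t_y$; combining this with right reversibility of $S$ (closed left ideals intersect, permitting one to align the various $I^t_y$) and with left invariance of $\mu$ (averaging $s\in I^t_y$ and transferring from $st.y$ to $t.y$ via the barycenter construction of Stage 2 applied at $t.y$) yields $r(t.y)\le r_0$, so $t.y\in A$. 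Minimality of $C$ then forces $A=C$ and $r_0=0$, i.e., $C=\{z\}$, and $z$ is the required common fixed point. The delicate issue is that the left ideals $I^t_y$ depend on both $y$ and $t$; $d$-separability of $C$ is used to reduce certain limiting arguments to countable approximations so that the barycenter construction and the radius-minimization argument can be meshed together cleanly.
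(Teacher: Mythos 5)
Your proposal has two genuine gaps, and they are precisely at the two places where the paper has to work hardest.

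First, the endgame does not close. If the Chebyshev-center set $A$ were $S$-invariant, minimality of $C$ would give $A=C$, but that only says every point of $C$ is a center of radius $r_0$, i.e.\ $C$ is \emph{diametral} with $r_0=\operatorname{diam}(C)$; it does not give $r_0=0$. To exclude diametral sets of positive diameter you need a DeMarr-type normal structure statement, and that requires the relevant set to be compact \emph{in the metric $d$}, not merely $\tau$-compact ($\tau$-compact convex sets can perfectly well be diametral --- this is why weak/weak* compactness alone never suffices in such theorems). Your outline never establishes $d$-compactness of anything. In the paper this is the content of Lemma \ref{lem:3}: one takes a minimal nonempty $\tau$-closed $S$-invariant (not convex) subset $Y$, proves via the $\operatorname{LIM}$-induced probability measure that $Y$ is $S$-preserving (Lemma \ref{lemma2}), and then uses Namioka's point-of-continuity theorem together with $d$-separability, super asymptotic $d$-nonexpansiveness and right reversibility to show $Y$ is totally bounded, hence $d$-compact; only then does Lemma \ref{DemarrFrechet} apply. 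This is the heart of the two-topology Fr\'echet difficulty and it is absent from your plan.

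Second, the ``crucial step'' you flag --- $S$-invariance of $A$ --- genuinely fails for non-affine, merely super asymptotically nonexpansive actions: the inequality $d(st.y,st.u)\le d(y,u)$ holds only for $s$ in a left ideal depending on $(y,t)$, so at best you can push a point of $A$ into $\bigcap_{u}\bar B[u,r_0+\varepsilon]$ by \emph{some} elements of \emph{some} left ideal, never get exact invariance. The paper circumvents this by not working with a minimal closed convex $S$-invariant set at all: it uses the Holmes--Lau device of a set $L_0$ minimal with respect to the weaker conditions $\mathbf{(\star1)}$--$\mathbf{(\star2)}$ (membership in closed sets $\Lambda_i$ achieved along left ideals), shows the ball intersection $N_0=L_0\cap\bigcap_{y\in Y}\bar B[y,r_0+\varepsilon]$ again satisfies $\mathbf{(\star1)}$--$\mathbf{(\star2)}$, and derives the contradiction from minimality of $L_0$. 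Relatedly, your barycenter $z$ carries no weight: for a non-affine action it is not a fixed point and cannot be ``meshed'' with the radius minimization; the correct use of the invariant mean here is to produce an invariant probability measure on $Y$ whose support forces $s.Y=Y$, not to produce a barycenter.
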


We note that in the setting of Theorem \ref{mainThm}, the left ideal in the definition of
the super asymptotic $d$--nonexpan\-siveness can be assumed closed. Indeed, suppose that for any $x\in Y$, there exists
for each $t\in S$ a left ideal $I^t_{x}$ of $S$ such that $d(stx,sty)\leq d(x,y)$ for all $s\in I^t_x$ and all $y\in Y$. For each
$s_0\in\overline{I^t_x}$, there is a net $\{s_\lambda\}$ in $I^t_x$ converging to $s_0$. Then $\{s_\lambda t x\}$ converges to $s_0tx$ in
the $\tau$ topology. Since $\tau$ is $d$--admissible,
$$d(s_0tx,s_0ty)\leq\liminf_\lambda d(s_\lambda tx,s_\lambda ty)\leq d(x,y).$$
Consequently, the said left ideal can be chosen to be $\overline{I^t_x}$.

The proof of  Theorem \ref{mainThm} needs several lemmas. The first one borrows from the proof of \cite[Theorem 3.1]{HolLau71}.
\begin{Lemma}\label{lemma1}
	Let $S$ be a right reversible semitopological semigroup.  Assume $S\times K\to K$ is a
separately  continuous action of  $S$
 on a  compact convex subset $K$ of a locally convex space.
Then there exists a subset $L_0$ of $K$ which is minimal with respect to being nonempty,
compact, convex and satisfying the following conditions.
	\begin{enumerate}
 	   \item[$\mathbf{(\star 1)}$] There exists a collection $\Lambda = \left\lbrace\Lambda_i: i\in I\right\rbrace$ of  closed subsets of $K$ such that $L_0 =\bigcap \Lambda$, and
    \item[$\mathbf{(\star 2)}$]  for each $x\in  L_0$  there is a left ideal $J_i\subseteq S$ such that $J_i.x\subseteq\Lambda_i$ for each $i\in I$.
	\end{enumerate}
	Furthermore, $L_0$ contains a subset $Y$ that is minimal with respect to being nonempty,   closed and $S$-invariant, i.e. $s.Y\subset Y$ for all $s\in S$.
\end{Lemma}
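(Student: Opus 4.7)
The plan is to apply Zorn's lemma twice: once to produce $L_0$ as a minimal element in a family of compact convex subsets of $K$ satisfying $(\star 1)$ and $(\star 2)$, and once more, after an intermediate construction, to extract a minimal nonempty closed $S$-invariant subset $Y$ of $L_0$.

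For the first Zorn application, I would let $\mathcal{F}$ denote the family of all nonempty compact convex subsets $L \subseteq K$ for which there exists a collection $\{\Lambda_i : i \in I\}$ of closed subsets of $K$ witnessing $(\star 1)$ and $(\star 2)$, ordered by reverse inclusion. The family is nonempty since $K$ itself qualifies via the singleton collection $\{K\}$ together with the left ideal $J = S$. For a descending chain $\{L_\alpha\}$ in $\mathcal{F}$, the intersection $L = \bigcap_\alpha L_\alpha$ is nonempty by the compactness of $K$, visibly compact and convex, and inherits $(\star 1)$ by taking the union of the witnessing collections; property $(\star 2)$ passes along because every point of $L$ already lies in each $L_\alpha$ and thus inherits the corresponding left ideal for each member of that $L_\alpha$'s collection. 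Zorn's lemma then yields a minimal $L_0 \in \mathcal{F}$.

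For the second step I would first exhibit one nonempty closed $S$-invariant subset of $L_0$. Fix $x \in L_0$ and let $\mathcal{J}$ be the family of closed left ideals of $S$. Right reversibility makes $\mathcal{J}$ downward directed under inclusion, so $\{\overline{J.x} : J \in \mathcal{J}\}$ is a downward-directed family of closed subsets of the compact set $K$, and
\[
Y_x := \bigcap_{J \in \mathcal{J}} \overline{J.x}
\]
is nonempty. For $y \in Y_x$, $s \in S$, and any $J \in \mathcal{J}$, separate continuity together with $sJ \subseteq J$ gives $sy \in \overline{(sJ).x} \subseteq \overline{J.x}$, so $Y_x$ is $S$-invariant. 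It lies in $L_0$: for each $\Lambda_i$ in the witnessing collection, take the left ideal $J_i$ supplied by $(\star 2)$ with $J_i.x \subseteq \Lambda_i$; then $\overline{J_i} \in \mathcal{J}$ and separate continuity yields $\overline{J_i}.x \subseteq \overline{J_i.x} \subseteq \Lambda_i$, so $Y_x \subseteq \overline{\overline{J_i}.x} \subseteq \Lambda_i$ for every $i$, whence $Y_x \subseteq \bigcap_i \Lambda_i = L_0$. A second application of Zorn's lemma to the nonempty family of closed nonempty $S$-invariant subsets of $L_0$ ordered by reverse inclusion then produces the desired minimal $Y$; chain intersections remain nonempty by compactness and are trivially closed and $S$-invariant.

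The step I expect to be the main obstacle is the verification that $Y_x \subseteq L_0$, where three ingredients must line up correctly: the left ideal supplied by $(\star 2)$, the passage to its closure to enter $\mathcal{J}$, and the separate continuity of the action, which is what lets one absorb an $s \in \overline{J_i}$ into the closure of $J_i.x$. The remaining verifications---the two Zorn chain arguments, convexity, and nonemptiness---reduce to standard compactness bookkeeping.
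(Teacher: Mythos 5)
Your proof is correct. The overall skeleton---Zorn's lemma to get $L_0$, an intermediate construction of one nonempty closed $S$-invariant subset of $L_0$, then Zorn's lemma again to get $Y$---is the same as the paper's (which follows Holmes and Lau), but the intermediate step is executed differently. The paper fixes $x\in L_0$, directs the finite intersections $\alpha=\Lambda_{i_1}\cap\cdots\cap\Lambda_{i_n}$ by reverse inclusion, uses right reversibility to pick $s_\alpha\in\bigcap_{k}\overline{J_{i_k}}$ so that $Ss_\alpha.x\subseteq\alpha$, and then takes $\overline{Sz}$ for a cluster point $z$ of the net $\{s_\alpha.x\}$. You instead form $Y_x=\bigcap_{J}\overline{J.x}$ over all closed left ideals $J$, with right reversibility supplying the finite intersection property. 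The two devices are interchangeable here: your $Y_x$ is somewhat more canonical and avoids selecting a cluster point, while the paper's version produces an invariant set of the explicit form $\overline{Sz}$. The step you flagged as delicate---showing $Y_x\subseteq L_0$ by passing from the ideal $J_i$ of $(\star 2)$ to its closure, then using separate continuity of $t\mapsto t.x$ and the closedness of $\Lambda_i$ to get $\overline{J_i}.x\subseteq\overline{J_i.x}\subseteq\Lambda_i$---is exactly where the paper also invokes separate continuity, and it goes through as you describe.
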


\begin{Lemma}[Muoi and Wong {\cite[Lemma 2.3]{MuoiWong2020}}]\label{lemma2}
	The subset $Y$ of $K$ in Lemma \ref{lemma1} is $S$-preserving, i.e., $s.Y=Y$ for all $s\in S$,
if we suppose further  that
$\operatorname{LUC}(S)$ has a $\operatorname{LIM}$ and the action is jointly continuous on $K$.
\end{Lemma}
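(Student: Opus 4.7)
The plan is to convert the left invariant mean on $\operatorname{LUC}(S)$ into an $S$-invariant regular Borel probability measure $\mu$ on the compact set $Y$, and then use the minimality of $Y$ together with the invariance of $\mu$ to force $s.Y=Y$ for every $s\in S$.

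First I would fix $y_0\in Y$ and, for each $f\in C(Y)$, consider the orbit function $g_f(s):=f(s.y_0)$, which is bounded and continuous by joint continuity of the action. The crux, and the main technical obstacle, is to verify that $g_f\in\operatorname{LUC}(S)$. Since $(l_tg_f)(s)=f(t.(s.y_0))$ with $s.y_0\in Y$, this reduces to the uniform estimate
\[
\sup_{x\in Y}|f(t.x)-f(t_0.x)|\;\longrightarrow\;0\quad\text{as } t\to t_0,
\]
and a standard tube argument applied to the jointly continuous composition $(t,x)\mapsto f(t.x)$ on $S\times Y$, using compactness of $Y$, delivers exactly this.

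Once $g_f\in\operatorname{LUC}(S)$, the formula $\Phi(f):=m(g_f)$ defines a positive linear functional on $C(Y)$ of norm one, so the Riesz representation theorem produces a probability Radon measure $\mu$ on $Y$ with $\int_Y f\,d\mu=m(g_f)$. Writing $T_t(y):=t.y$ for the continuous map $Y\to Y$, the identity $(l_tg_f)(s)=(f\circ T_t)(s.y_0)$ together with left invariance of $m$ yields
\[
\int_Y f\circ T_t\,d\mu \;=\; m(l_tg_f) \;=\; m(g_f) \;=\; \int_Y f\,d\mu\qquad(t\in S,\ f\in C(Y)),
\]
so $\mu$ is $S$-invariant, i.e.\ $\mu\circ T_t^{-1}=\mu$ for every $t\in S$.

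To finish, set $F:=\operatorname{supp}\mu\subseteq Y$. From $\mu(T_t^{-1}F)=\mu(F)=1$ and the closedness of $T_t^{-1}F$, the defining minimality of the support gives $F\subseteq T_t^{-1}F$, i.e.\ $t.F\subseteq F$ for every $t\in S$. Thus $F$ is a nonempty closed $S$-invariant subset of $Y$, and the minimality of $Y$ forces $F=Y$; in particular $\mu$ has full support. Now for any fixed $s\in S$, the compact set $s.Y=T_s(Y)$ is closed in $Y$ and $T_s^{-1}(s.Y)=Y$, so $\mu(s.Y)=\mu(Y)=1$. Combined with $\operatorname{supp}\mu=Y$ this forces $Y\subseteq s.Y$, and therefore $s.Y=Y$ as required.
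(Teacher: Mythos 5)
Your proposal is correct and follows essentially the same route as the paper: fix a point of $Y$, push the left invariant mean on $\operatorname{LUC}(S)$ (via the orbit functions $s\mapsto f(s.y_0)$, which lie in $\operatorname{LUC}(S)$ by joint continuity and compactness of $Y$) to an $S$-invariant probability measure on $Y$, and then use the support of that measure together with the minimality of $Y$ to conclude $s.Y=Y$. The only difference is cosmetic ordering: you first identify $\operatorname{supp}\mu=Y$ by minimality and then deduce $Y\subseteq s.Y$, whereas the paper shows $s.\operatorname{supp}\mu=\operatorname{supp}\mu$ directly and invokes minimality at the end.
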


\begin{proof}
For completeness, we sketch here the argument in the proof of \cite[Lemma 2.3]{MuoiWong2020}.
	For each pair of $y\in Y$ and $f\in C(Y)$, define a function $R_yf\in l^\infty(S)$ by $R_yf(s)=f(s.y)$.  Following
the proof of \cite[Theorem 1]{MIT70}, with the joint continuity of the action we can show that $R_yf\in \operatorname{LUC}(S)$.

	Let $m$ be a $\operatorname{LIM}$ of  $\operatorname{LUC}(S)$.
Define a linear functional $\psi$ on $C(Y)$ by $\psi(f)=m(R_yf)$.   Observe
	$$
|\psi(f)|\leq \|R_yf\|=\sup_{s\in S}|f(s.y)|\leq \|f\|_{C(Y)}, \quad\forall f\in C(Y).
$$
Since $\psi(1)=1$, we have $\|\psi\|=1$. For each $t\in S$, notice that
$$
\psi(l_tf)=m(R_y(l_tf))=m(l_t(R_yf))=m(R_yf)=\psi(f).
$$
Thus, $\psi$ is a left invariant mean of $C(Y)$. Let $\mu$ be the probability measure on $Y$ corresponding to $\psi$.
Define $Y_0$ to be the support of
$\mu$, i.e.,
$$
Y_0=\operatorname{supp}(\mu)=\bigcap\left\lbrace F\subseteq Y: F \ \text{is closed and}\ \mu(F)=1\right\rbrace.
$$

We are going to see that $s.Y_0=Y_0$ for all $s\in S$.
For every Borel subset $A$ of $Y$ and $s\in S$, define $L_s(x)=s.x$ and $L_s^{-1}A=\left\lbrace x\in Y: s.x\in A\right\rbrace$.
Since $\psi$ is left invariant, we have
$$
\mu(A)= \int_Y \mathbf{1}_A(x)\, d\mu = \int_Y \mathbf{1}_A(s.x) \,d\mu = \int_Y \mathbf{1}_{L_s^{-1}A}(x) \, d\mu = \mu(L_s^{-1}A).
$$
Because $L_s^{-1}Y_0$
is closed and {$\mu(L_s^{-1}Y_0)=\mu(Y_0)=1$}, we have  $Y_0\subseteq L_s^{-1}Y_0$ or $sY_0\subseteq Y_0$.
On the other hand, $\mu(sY_0)=\mu(L_s^{-1}(sY_0))\geq\mu(Y_0)=1$.
This implies $Y_0\subseteq sY_0$, and thus $sY_0=Y_0$.
By the minimality of $Y$, we conclude that $Y=Y_0$ is $S$-preserving.
\end{proof}

\begin{Lemma}\label{lem:3}
	Let $(X,d)$ be a Fr\'{e}chet space with a $d$--admissible locally convex  topology $\tau$.
Let $S$ be a right reversible semitopological semigroup. Assume that an action of $S$ on a $d$--separable and
 $\tau$--compact subset $Y$ of $X$ is separately $\tau$ continuous and
  super asymptotically $d$--nonexpan\-sive. Suppose $Y$ is minimal with
 respect to being $\tau$--closed and $S$-invariant. Suppose further that  there exists a nonempty $\tau$--closed subset $F$ of $Y$ such that
 $F\subset sF$ for all $s\in S$. Then $F$ is $d$--compact. Especially, if $Y$ is $S$--preserving then $Y$ is $d$--compact.
\end{Lemma}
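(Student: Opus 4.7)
The plan is to show that $F$ is both $d$-closed and $d$-totally bounded; completeness of $(X,d)$ then yields $d$-compactness. The $d$-closedness is immediate: $F$ is $\tau$-closed, and since $\tau$ is weaker than the $d$-topology, any $d$-limit point of $F$ is already a $\tau$-limit point and hence lies in $F$.

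The central tool for $d$-total boundedness is a Chebyshev-type variational argument combined with minimality of $Y$. Define $r : Y \to [0,\infty]$ by $r(x) := \sup_{y \in F} d(x,y)$, which is $\tau$-lower semicontinuous by the $\tau$-lsc of $d$. A preliminary Baire-category step on the $\tau$-compact Hausdorff (hence $\tau$-Baire) space $Y$, using $d$-separability of $Y$ and $\tau$-closedness of $d$-closed balls, shows that $F$ is $d$-bounded, so $r$ is finite-valued. Since $Y$ is $\tau$-compact, $r$ attains its minimum $r_0 \geq 0$ at some $x_0 \in Y$. For every $t \in S$ and every $s$ in the closed left ideal $\overline{I^t_{x_0}}$ (cf.\ the remark after Theorem~\ref{mainThm}), super asymptotic $d$-nonexpansiveness gives $d(stx_0, stz) \leq d(x_0,z)$ for all $z \in Y$. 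The hypothesis $F \subset sF$ for every $s \in S$ implies $F \subset (st)F$, so each $y \in F$ admits a representation $y = sty'$ with $y' \in F$, whence
$$d(stx_0, y) \;=\; d(stx_0, sty') \;\leq\; d(x_0, y') \;\leq\; r_0.$$
Taking the supremum over $y$, $r(stx_0) \leq r_0$, with equality by minimality of $r_0$. Thus the orbit $O := \{stx_0 : t \in S,\, s \in \overline{I^t_{x_0}}\}$ lies in the minimizer set $M := \{x \in Y : r(x) = r_0\}$. Since each $\overline{I^t_{x_0}}$ is a left ideal of $S$, the orbit $O$ is $S$-invariant; by separate $\tau$-continuity of the action, $\overline{O}^{\,\tau}$ is non-empty, $\tau$-closed and $S$-invariant in $Y$, hence equals $Y$ by minimality. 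As $M$ is $\tau$-closed (by $\tau$-lsc of $r$), we conclude $r \equiv r_0$ on $Y$, and in particular $\mathrm{diam}_d(Y) \leq 2r_0$, so $Y$ is $d$-bounded.

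Upgrading $d$-boundedness to $d$-total boundedness is the main obstacle. My plan is to rerun the Chebyshev/orbit scheme with a noncompactness-style functional in place of $r$. Suppose for contradiction that $F$ is not $d$-totally bounded; a $d$-$\varepsilon_0$-separated sequence $\{y_n\} \subset F$ then exists. Apply the same minimisation-plus-$S$-invariance argument to $\rho(x) := \sup_n d(x, y_n)$ (still $\tau$-lsc) to derive $\rho \equiv \rho_0$ on $Y$. The delicate point is that at a minimiser $x^*$, super asymptotic nonexpansiveness yields only $\rho(stx^*) \leq \sup_n d(x^*, y'_n)$, where $y_n = sty'_n$ with $y'_n \in F$ from $F \subset (st)F$. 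The key observation is that super asymptotic nonexpansiveness applied to the separation gives $d(y'_n, y'_m) \geq d(sty'_n, sty'_m) = d(y_n, y_m) \geq \varepsilon_0$, so $\{y'_n\}$ is itself $\varepsilon_0$-separated in $F$. Invoking right reversibility of $S$ to intersect the closed left ideals $\overline{I^t_{y_n}}$ over finitely many indices simultaneously, and then passing to $\tau$-cluster-point limits in the $\tau$-compact $Y$, should force $\rho_0$ down to a value incompatible with the lower bound $\rho_0 \geq \varepsilon_0$ coming from $\{y_n\}$ being $\varepsilon_0$-separated, delivering the required contradiction.
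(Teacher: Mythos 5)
Your first step (the Chebyshev-radius/orbit argument showing $r\equiv r_0$ on $Y$, hence $d$-boundedness of $Y$) is essentially sound, but it is not where the difficulty of Lemma \ref{lem:3} lies: in a Fr\'echet space a $d$-closed, $d$-bounded set is far from $d$-compact, so everything hinges on \emph{total} boundedness, and that is exactly the step you leave as a hope. Your sketch for it does not close. When you pass from $\rho(stx^*)=\sup_n d(stx^*,y_n)$ to $\sup_n d(x^*,y_n')$ you have replaced the functional $\rho$ by a \emph{different} functional built from the new separated sequence $\{y_n'\}$; minimality of $\rho_0$ therefore gives you no equality and no invariant minimizer set to intersect with a dense orbit. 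More importantly, there is no mechanism in your scheme that ``forces $\rho_0$ down'': you have a lower bound $\rho_0\geq\varepsilon_0/2$ (note: $\varepsilon_0/2$, not $\varepsilon_0$, from the triangle inequality) but nothing that produces a competitor with strictly smaller radius, because the set $\{y_n\}$ is neither $S$-invariant nor related to $Y$ by any normal-structure-type inequality. That such an argument cannot succeed on soft grounds alone is shown by the unit ball of $l_1=c_0^*$: it is weak*-compact, norm-separable and norm-closed but not norm-compact, so any proof must exploit the interplay of the action, the minimality of $Y$, and $d$-separability in an essential way.

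The missing ingredient, which is the engine of the paper's proof, is Namioka's theorem \cite[Corollary 1.3]{Nami67}: since $Y$ is $\tau$-compact and $d$-separable, the identity map $(Y,\tau)\to(Y,d)$ has a dense $G_\delta$ set of points of continuity. Fixing such a point $z$ and an arbitrary $d$-ball $U$ around $0$, one gets a $\tau$-neighborhood $V$ of $0$ with $(z+V)\cap Y\subset(z+U)\cap Y$. One then covers $Y$ by countably many $d$-balls $x_i+U_1$ (separability), uses super asymptotic $d$-nonexpansiveness together with the minimality of $Y$ (density of $\overline{I r_0.x_i}$ in $Y$) to produce $r_i\in S$ sending $x_i$ into $z+V_1$ while contracting distances from $x_i$, so that $L_{r_i}$ maps $(x_i+U_1)\cap Y$ into $(z+V)\cap Y$; $\tau$-compactness of $Y$ then extracts a finite subcover $Y=\bigcup_{i=1}^n L_{r_i}^{-1}((z+V)\cap Y)$, and a final application of super asymptotic nonexpansiveness plus right reversibility transports this, via $F\subset sF$, into a cover of $F$ by finitely many $d$-translates $L_{t_0t_i}z+U$. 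Your proposal never uses (and could not replace) this generic $\tau$-to-$d$ continuity point, which is precisely where the hypothesis of $d$-separability does its work; as written, the total-boundedness step, and hence the lemma, remains unproved.
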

\begin{proof} We follow an idea from the proof of
 \cite[Lemma 3.1]{LauTaka09} in which  nonexpan\-sive actions are considered instead.
	Let $Z$ be the set of all points of continuity of the identity mapping from $(Y,\tau)$ to $(Y,d)$. By \cite[Corollary 1.3]{Nami67},
$Z$ is a dense $G_\delta$ subset of $(Y,\tau)$. Let $U=\{x\in X: d(x,0)<\varepsilon\}$
be a $d$--neighborhood of 0 where $\varepsilon>0$. For each $z\in Z$, there exists a $\tau$--neighborhood $V$ of
$0$ such that $(z+V)\cap Y\subset (z+U)\cap Y$.
	 We can choose a $\tau$--neighborhood $V_1$ of $0$ such that $V_1+V_1\subseteq V$.
Since the $d$--topology is stronger than $\tau$, we have $V_1$ contains a $d$--open neighborhood $U_1$ of $0$. Assume
$U_1=\{x\in X: d(x,0)<\delta\}$ for some $\delta>0$. Since $Y$ is $d$--separable, there exists
a sequence $\{x_i: i\in \mathbb{N}\}$ in $Y$ such that
$$
Y=\bigcup\left\lbrace (x_i+U_1)\cap Y: i\in\mathbb{N}\right\rbrace.
$$

	Since the action is super asymptotically $d$--nonexpan\-sive, for each given $r_0\in S$, there exists a left ideal
$I_1=I_{x_1}^{r_0}$ of $S$ such that $d(sr_0.x_1,sr_0.y)\leq d(x_1,y)$ for all  $s\in I_1$ and $y\in Y$.
Since $I_1r_0.x_1$ is $S$-invariant in $Y$, by the minimality of $Y$, its $\tau$--closure must be exactly $Y$. Thus, there exists a $s_1\in I_1$ such that $s_1r_0.x_1\in (z+V_1)\cap Y$.
Let $r_1=s_1r_0$, we have $r_1.x_1\in (z+V_1)\cap Y$ and  $d(r_1.x_1,r_1.y)\leq d(x_1,y)$ for all  $y\in Y$.
	
	Similarly, there exists a left ideal $I_2=I_{x_2}^{r_1}$ of $S$ such that $d(sr_1.x_2,sr_1.y)\leq d(x_2,y)$ for
all  $s\in I_2$ and all $y\in Y$. There exists $s_2\in I_2$ such that
$s_2r_1.x_2\in (z+V_1)\cap Y$. Let $r_2:=s_2r_1$, we have $r_2.x_2\in (z+V_1)\cap Y$ and $d(r_2.x_2,r_2.y)\leq d(x_2,y)$ for all  $y\in Y$.
		By induction, we can choose a sequence $\left\lbrace r_i: i\in \mathbb{N}\right\rbrace$ in $S$ such that
\begin{gather*}
r_i.x_i\in (z+V_1)\cap Y,\quad  r_i=s_is_{i-1}\cdots s_1r_0,
\intertext{and}
 d(r_i.x_i,r_i.y)\leq d(x_i,y), \quad\forall  y\in Y \mbox{ and } i\geq 1.
\end{gather*}
	
	For each $y\in (x_i+U_1)\cap Y$, we have $r_iy=(r_iy-r_ix_i)+r_ix_i$ where $r_ix_i\in (z+V_1)\cap Y$ and
$d(r_ix_i,r_iy)\leq d(x_i,y)=d(x_i-y,0)<\delta$. Thus,
$$
r_i((x_i+U_1)\cap Y)\subseteq (z+V_1+U_1)\cap Y\subseteq (z+V)\cap Y.
$$
	We rewrite the action $r.x$ in the form of $L_rx$.
Then $(x_i+U_1)\cap Y\subseteq L_{r_i}^{-1}((z+V)\cap Y)$, where $L_{r_i}^{-1}((z+V)\cap Y)$ is $\tau$--open by the separate
$\tau$--continuity of the action.
By the $\tau$--compactness, we can cover $Y$ by  finitely many such open sets.  Let
$$
Y=\bigcup_{i=1}^{n}L_{r_i}^{-1}((z+V)\cap Y).
$$
	
	It follows from the super asymptotic $d$--nonexpan\-siveness of the action that
 there exist closed left ideals $J_i=J_{z}^{t_i}$, where $t_i=s_{n+1}s_{n}\cdots s_{i+1}$ and $i=1,\ldots, n$, such that
 $d(st_i.z,st_i.y)\leq d(z,y)$ for all  $y\in Y$ and $s\in J_i$. Since $S$ is right reversible, there exists $t_0\in\cap_{i=1}^{n}J_i $. Consequently,
\begin{align}\label{eq:dist-bar}
 d(t_0t_i.z,t_0t_i.y)\leq d(z,y) \quad\text{for all}\  y\in Y \mbox{ and } \ i=1,\ldots, n.
\end{align}

By the assumption, $F\subset sF$ for all $s\in S$, we have
\begin{equation*}
\begin{array}{rl}
F\subset L_{t_0}L_{r_{n+1}}F\subset L_{t_0}L_{r_{n+1}}Y &=L_{t_0}L_{r_{n+1}}\left\lbrace \bigcup_{i=1}^{n}L_{r_i}^{-1}((z+V)\cap Y)\right\rbrace\\
&\subseteq \bigcup_{i=1}^{n}\left\lbrace L_{t_0}L_{s_{n+1}\cdots s_{i+1}}((z+U)\cap Y)\right\rbrace\\
&= \bigcup_{i=1}^{n}\left\lbrace L_{t_0t_i}((z+U)\cap Y)\right\rbrace\\
&\subseteq \bigcup_{i=1}^{n}\left\lbrace (L_{t_0t_i}z+U)\cap Y)\right\rbrace\\
&\subseteq \bigcup_{i=1}^{n}\left\lbrace L_{t_0t_i}z+U\right\rbrace.
\end{array}
\end{equation*}
The second last inclusion above follows from \eqref{eq:dist-bar}. This proves that the $d$--closed subset $F$ can be covered by
 finitely many translates of any given
 $d$--neighborhood $U$ of $0$.  In other words, $F$ is totally bounded. Since $(X,d)$ is complete, $F$ is $d$--compact.
	\end{proof}

We provide below a metric version of  DeMarr's Lemma \cite[Lemma 1]{DeMarr63}.

\begin{Lemma}\label{DemarrFrechet}
Let $(X,d)$ be a metrizable locally convex space with a ``good'' metric $d$.
Let $Y$ be a compact subset of $X$ containing more than one point. Then there exists a point $u$ in
the convex hull $\operatorname{conv}(Y)$ of $Y$ such that
 $$\sup\left\lbrace d(u,y): y\in Y\right\rbrace<\sup\left\lbrace d(x,y): x,y\in Y\right\rbrace.$$
\end{Lemma}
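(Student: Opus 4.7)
The strategy is to reduce the claim to DeMarr's classical lemma \cite[Lemma 1]{DeMarr63} applied to a single Minkowski seminorm extracted from the ``good''-metric structure. Let $D = \sup\{d(x,y) : x, y \in Y\}$; this is positive since $Y$ has more than one point. The closed $d$-ball $\bar B_D(0) = \{v \in X : d(v, 0) \leq D\}$ is absolutely convex, so its Minkowski functional $q_D(v) = \inf\{\lambda > 0 : v \in \lambda \bar B_D(0)\}$, as in \eqref{MinkowskiSeminorm}, is a continuous seminorm on $X$. For every $x, y \in Y$, $d(x, y) \leq D$ translates via translation invariance into $x - y \in \bar B_D(0)$, giving $q_D(x - y) \leq 1$; hence $\operatorname{diam}_{q_D}(Y) \leq 1$.

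The plan is then to apply DeMarr's lemma to $Y$ viewed as a compact subset of the seminormed space $(X, q_D)$. After passing, if necessary, to the quotient normed space $X / \ker q_D$, the classical averaging argument yields a convex combination $u = \sum_i \alpha_i y_i \in \operatorname{conv}(Y)$ with
\[
\sup_{y \in Y} q_D(u - y) \ <\ \operatorname{diam}_{q_D}(Y) \ \leq\ 1.
\]
By definition of the Minkowski functional, $q_D(u - y) < 1$ yields some $\lambda < 1$ with $(u - y)/\lambda \in \bar B_D(0)$, i.e.\ $d((u - y)/\lambda, 0) \leq D$. Invoking the strict scaling property $d(\lambda v, 0) < d(v, 0)$ for $|\lambda| < 1$ and $v \neq 0$, which the ``good'' metric built from the Minkowski seminorms in \eqref{MinkowskiSeminorm}--\eqref{eq:d=q_r} inherits from its seminorm representation, one deduces $d(u - y, 0) < D$ uniformly in $y \in Y$. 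Translation invariance of $d$ then delivers $\sup_{y \in Y} d(u, y) < D$.

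The main obstacle is the passage from the strict $q_D$-bound to the strict $d$-bound. Absolute convexity of $\bar B_D(0)$ alone delivers only the weak inequality $d(u, y) \leq D$; the strict inequality uses the strict scaling behavior of $d$ under $\lambda \in (0, 1)$, which has to be read off from the specific construction of the ``good'' metric via its countable family of Minkowski seminorms. A secondary difficulty is the degenerate case $\operatorname{diam}_{q_D}(Y) = 0$, in which DeMarr applied to $q_D$ is vacuous; there one should instead work with $q_r$ for some rational $r < D$ with $\operatorname{diam}_{q_r}(Y) > 0$ and transfer strictness via the infimum representation \eqref{eq:d=q_r}.
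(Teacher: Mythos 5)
Your overall strategy --- extracting a single Minkowski seminorm from the ``good''-metric structure and invoking DeMarr's averaging idea --- is the same as the paper's, but the way you close the argument is different, and the closing step contains a genuine gap. The paper does not apply DeMarr's lemma abstractly in the seminormed quotient and then try to transfer a strict $q$-inequality back to $d$. Instead it reruns DeMarr's argument at the level of $d$ itself: it picks a rational $\lambda>r=\operatorname{diam}(Y)$ so that (on the relevant ball) $d(x,y)=q_\lambda(\lambda(x-y))$, takes a finite subset $M=\{x_1,\dots,x_n\}$ of $Y$ maximal with respect to all pairwise $d$-distances equalling $r$, sets $u=\frac1n\sum_i x_i$, and shows that if some $y_0\in Y$ had $d(u,y_0)=r$, then subadditivity of $q_\lambda$ would force $d(x_i,y_0)=r$ for every $i$, whence $y_0\in M$ by maximality, contradicting $d(x_{i_0},y_0)=r>0$. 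The strictness thus comes from the combinatorial maximality of the equilateral set, not from any scaling behaviour of the metric.

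The step you yourself flag as the main obstacle is exactly where your argument breaks: the ``strict scaling property'' $d(\lambda v,0)<d(v,0)$ for $0<\lambda<1$ and $v\neq 0$ is not part of the definition of a ``good'' metric and is false in general. For instance, $d(x,y)=\min(|x-y|,1)$ on $\mathbb{R}$ is translation invariant with open absolutely convex balls, yet $d(\tfrac12 v,0)=d(v,0)=1$ whenever $|v|\geq 2$; and the representation \eqref{eq:d=q_r} only yields $d(\lambda v,0)\leq d(v,0)$, since the infimum can sit on a plateau of the family $\{q_r\}$. For the same reason your fallback in the degenerate case $\operatorname{diam}_{q_D}(Y)=0$ does not work: a strict bound $\sup_y q_r(u-y)<\operatorname{diam}_{q_r}(Y)$ for some $r<D$ gives no control of $d(u,y)$ below $D$, because $\operatorname{diam}_{q_r}(Y)$ may well exceed $1$. (The same example shows that some nondegeneracy of the balls near radius $\operatorname{diam}(Y)$ is implicitly being used even in the paper's own identity $d(x,0)=q_\lambda(\lambda x)$, but that is a separate matter.) To repair your proof along the paper's lines, keep your seminorm but use it only to obtain the convexity inequality $d(u,y)\leq\frac1n\sum_i d(x_i,y)$ for $u=\frac1n\sum_i x_i$ with $x_i,y\in Y$, and derive the strict inequality from a maximal $r$-equilateral subset of the compact set $Y$ rather than from a scaling property of $d$.
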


\begin{proof}
Let
$r=\mbox{diam} (Y)=\sup\left\lbrace d(x,y): x, y\in Y\right\rbrace$. Then $x-y\in\bar{B}_r(0)$ for all $x, y\in Y$.
For each $\lambda\in\mathbb{Q}\cap (0,+\infty)$, the metric balls $B_\lambda(0)$ is open, convex and balance. Thus
\begin{align*}
B_\lambda(0)&=\left\lbrace x\in X: d(x,0)<\lambda\right\rbrace\\
&=\left\lbrace x\in X: q_\lambda(x)<1\right\rbrace
=\left\lbrace x\in X: q_\lambda(\lambda x)<\lambda\right\rbrace,
\end{align*}
where the seminorm $q_\lambda$ is defined in \eqref{MinkowskiSeminorm}.
This implies
$$
d(x,0)=q_\lambda(\lambda x)\ \text{for each}\ \lambda\in\mathbb{Q}\cap (0,+\infty)\ \text{and}\ x\in B_\lambda(0).
$$

Choose a $\lambda\in\mathbb{Q}\cap (0,+\infty)$ such that $\lambda>r$, we have $d(x,y)=p_\lambda(\lambda(x-y))$ for all $x, y\in Y$.
Since $Y$ is compact, there exists a finite subset $M=\{x_1, x_2,\ldots,x_n\}\subset Y$ which is maximal with respect to being that $d(x_i,x_j)=r$ for all $i\neq j$.

Let $u=\frac{1}{n}\sum_{i=1}^{n}x_i\in \operatorname{conv}(Y)\subseteq\bar{B}_r(0)$ and $y_0\in Y$ such that $r_0=d(u,y_0)=\max_{y\in Y}d(u,y)$.
Suppose on the contrary that $r_0=r$.  Then
$$
r=d(u,y_0)=p_\lambda(\lambda(u-y_0))\leq\frac{1}{n}\sum_{i=1}^{n}p_\lambda(\lambda(x_i-y_0))=\frac{1}{n}\sum_{i=1}^{n}d(x_i,y_0)\leq r.
$$
This drives $d(x_i,y_0)=r$ for all $i=1,\ldots,n$. By the maximality of $M$, we have $y_0=x_{i_0}$ for some $i_0\in \{1,\ldots,n\}$. This
conflicts with the fact that $d(x_{i_0},y_0)=r>0$.
\end{proof}

Following the idea in the proofs of \cite[Theorem 3.1]{HolLau71}  and \cite[Theorem 4.2]{AAR2018}, we are able to prove our main result.

\begin{proof}[Proof of Theorem \ref{mainThm}]
By Lemmas \ref{lemma2} and \ref{lem:3}, the nonempty
 $S$-preserving set $Y$ given in Lemma \ref{lemma1} is a $d$--compact subset of
the Fr\'{e}chet space $X$. Consequently, the $d$--topology agrees with  $\tau$  on $Y$.
If  $Y$ contains exactly one point  then we are done. Otherwise, let
$$
r=\mbox{diam} (Y)=\sup\left\lbrace d(x,y): x, y\in Y\right\rbrace.
$$
By Lemma \ref{DemarrFrechet}, there exists a  $u\in\operatorname{conv}(Y)$ such that
 $$r_0=\sup\left\lbrace d(u,y): y\in Y\right\rbrace<\sup\left\lbrace d(x,y): x,y\in Y\right\rbrace=r.$$

Let $0<\varepsilon<r-r_0$.
For each $\Lambda\in \left\lbrace\Lambda_i: i\in I\right\rbrace$  in Lemma \ref{lemma1}, we
set
\begin{align*}
 N_{\varepsilon,\Lambda}&= \bigcap_{y\in Y} \{x\in \Lambda : d(x,y)\leq  r_0+\varepsilon\}
\intertext{and}
N_0&=\bigcap\left\lbrace N_{\varepsilon, \Lambda_i}:  \ i\in I\right\rbrace= L_0\cap \bigcap_{y\in Y}\bar{B}[y, r_0+\varepsilon],
\end{align*}
where $\bar{B}[y, \delta]$ denotes  the closed ball centered at $y$ of radius $\delta$.

We show that $N_0$  satisfies conditions $\mathbf{(\star1)}$ and $\mathbf{(\star2)}$. Indeed, every $N_{\varepsilon,\Lambda_i}$ is $d$--compact.
Thus $N_0$ is a $d$--compact subset of $L_0$, and contains $u$.
For each $x\in N_0$ and  $i\in I$, there exists a left ideal $I\subseteq S$ such that $I.x\subseteq \Lambda_i$.
By the super asymptotic $d$--nonexpan\-siveness of the action, for each $t\in S$ there exists a left ideal $I_{x}^t$
such that  $d(st.y,st.x)\leq d(y,x)$ for all  $y\in K$ and $s\in I_{x}^t$.
By the right reversibility of $S$, there exists a  $t_0\in\overline{I}\cap\overline{I_{x}^t t}$. Since $\Lambda_i$ is $\tau$--closed, $St_0.x\subseteq \Lambda_i$.
   Consider a net $s_\lambda\in I_{x}^t$ such that $s_\lambda t\to t_0$.
From  $d(ss_\lambda t.y,ss_\lambda t.x)\leq d(y,x)\leq r_0+\varepsilon$ for all $\lambda$, $y\in Y$ and $s\in S$,
we have $d(st_0.y,st_0.x)\leq d(y,x)\leq r_0+\varepsilon$.
Since $Y\subset st_0.Y$, we have $d(y', st_0.x)\leq r_0+\varepsilon$ for all $y'\in Y$.
In other words, there exists a left ideal $J=St_0$ of $S$
such that $J.x\subseteq N_{\varepsilon, \Lambda_i}$.  Consequently, the nonempty, $\tau$--compact,
convex subset $N_0$   satisfies conditions $\mathbf{(\star1)}$ and $\mathbf{(\star2)}$.

By the minimality of $L_0$, we have
$Y\subseteq L_0= N_0 \subseteq \bigcap_{y\in Y}\bar{B}[y, r_0+\varepsilon]$.
This gives us a contradiction that $\operatorname{diam}(Y) \leq r_0+\epsilon < r$.
Therefore, $Y$ contains a unique point and it is the common fixed point for the action of $S$ on $K$.
\end{proof}

When the semigroup $S$ is right reversible,
the following proposition of us in \cite{MuoiWong2020} is an extension of a result by Lau and Zhang
\cite[Theorem 6.2]{LauZhang12}. Their result holds for norm nonexpansive and jointly weak*
continuous actions on a weak* compact convex subset of a Banach dual space.
Here, in this paper, we have a new proof.  In fact, it is a direct consequence of Theorem \ref{mainThm}, by noting that
for a Banach space (resp.\  a Banach dual space) the weak (resp.\ weak*) topology is $\|\cdot\|$--admissible.

\begin{Proposition}[Muoi and Wong {\cite[Theorem 2.5]{MuoiWong2020}}]\label{MWFjsep}
	Let $S$ be a right reversible semitopological semigroup.
	Assume that $\operatorname{LUC}(S)$ has a $\operatorname{LIM}$.
Then $S$ has the following fixed point property.
	
	\begin{quote}
		$\mathbf{(F^{sup}_{jw^*c,Nsep})}$ Every  super asymptotically non\-expan\-sive and jointly weak* continuous  action   of  $S$
		on a weak* compact convex and norm separable subset of  a Banach dual space has a common fixed point.
	\end{quote}
\end{Proposition}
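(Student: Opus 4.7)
The plan is to follow the classical Chebyshev-center shrinking argument on the minimal set produced by Lemma~\ref{lemma1}, adapted to the Fr\'{e}chet setting through the ``good'' metric and the $d$-admissibility of $\tau$. Lemmas~\ref{lemma2}, \ref{lem:3} and \ref{DemarrFrechet} should play, respectively, the roles of upgrading invariance to $s.Y = Y$, promoting the minimal $\tau$-closed set $Y$ to a $d$-compact one, and producing a point in $\operatorname{conv}(Y)$ strictly closer to every $y \in Y$ than the $d$-diameter of $Y$.

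Concretely, I would apply Lemma~\ref{lemma1} to the (jointly, hence separately) $\tau$-continuous action on $K$ to obtain the minimal $\tau$-compact convex set $L_0$ with its family $\Lambda = \{\Lambda_i : i \in I\}$ together with a $\tau$-closed nonempty minimal $S$-invariant subset $Y \subseteq L_0$. Lemma~\ref{lemma2} gives $s.Y = Y$ for every $s \in S$, and Lemma~\ref{lem:3} with $F = Y$ makes $Y$ a $d$-compact set, so $\tau$ and the $d$-topology agree on $Y$. Argue by contradiction by supposing $Y$ has at least two points, set $r := \operatorname{diam}(Y) > 0$, use Lemma~\ref{DemarrFrechet} to find $u \in \operatorname{conv}(Y)$ with $r_0 := \sup_{y \in Y} d(u,y) < r$, fix $0 < \varepsilon < r - r_0$, and form
\begin{equation*}
N_{\varepsilon,\Lambda_i} := \Lambda_i \cap \bigcap_{y \in Y}\bar{B}[y, r_0+\varepsilon], \qquad N_0 := \bigcap_{i \in I} N_{\varepsilon,\Lambda_i} = L_0 \cap \bigcap_{y \in Y}\bar{B}[y, r_0+\varepsilon].
\end{equation*}
Since $d$ is $\tau$-lower semicontinuous, each closed ball is $\tau$-closed, so $N_0$ is a nonempty (it contains $u$), $\tau$-compact, convex subset of $L_0$.

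The central task, and the main obstacle, is to verify that $N_0$ again satisfies conditions $\mathbf{(\star 1)}$ and $\mathbf{(\star 2)}$ of Lemma~\ref{lemma1}: for then minimality of $L_0$ forces $L_0 = N_0$, placing $Y \subseteq L_0 \subseteq \bigcap_{y\in Y}\bar{B}[y,r_0+\varepsilon]$ and giving $\operatorname{diam}(Y) \le r_0 + \varepsilon < r$, the desired contradiction. Condition $\mathbf{(\star 1)}$ is immediate from the display above. For $\mathbf{(\star 2)}$, given $x \in N_0$ and $i \in I$, membership of $x$ in $L_0$ supplies a left ideal $I$ with $I.x \subseteq \Lambda_i$, while for each $t \in S$ the super asymptotic $d$-nonexpansiveness at $x$ supplies a closed left ideal $I^t_x$ with $d(st.x, st.y) \le d(x,y)$ for all $s \in I^t_x$ and $y \in K$. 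Right reversibility then yields $t_0 \in \overline{I} \cap \overline{I^t_x\, t}$; approximating $t_0$ by a net $s_\lambda t$ with $s_\lambda \in I^t_x$ and invoking the $\tau$-lower semicontinuity of $d$ transfers the nonexpansive inequality to $d(st_0.x, st_0.y) \le d(x,y)$ for all $s \in S$ and $y \in Y$. Finally, $S$-preservation $st_0.Y = Y$ lets every $y' \in Y$ be written as $st_0.y$ for some $y \in Y$, yielding $d(y', st_0.x) \le r_0 + \varepsilon$ and hence $St_0.x \subseteq N_{\varepsilon,\Lambda_i}$. The delicate interplay here between the two distinct left ideals (combined via right reversibility), the $\tau$-lower semicontinuity of $d$ (which preserves the inequality in the limit step to $t_0$), and the $S$-preservation of $Y$ (which converts the pointwise asymptotic estimate into a \emph{uniform} bound against every $y' \in Y$) is precisely where the full strength of the hypotheses has to be spent.
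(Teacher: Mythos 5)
Your proposal is correct and is essentially the paper's own argument: the paper dispatches this proposition in one line by observing that the norm metric of a Banach dual space is a ``good'' metric and that the weak* topology is $\|\cdot\|$--admissible, so Theorem~\ref{mainThm} applies directly, and what you have written is precisely the proof of Theorem~\ref{mainThm} unrolled in that special case. All the steps you flag as delicate (the two left ideals combined via right reversibility, the $\tau$--lower semicontinuity of $d$ in the limit to $t_0$, and the $S$--preservation of $Y$) are handled exactly as in the paper's proof of that theorem, so there is no gap.
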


 It is known that every finite Radon measure on a weakly compact subset of a Banach space has a norm separable support
 (see, e.g., \cite[Theorem 4.3, page 256]{Joram72}). This implies that the subset $Y$ in Lemma \ref{lemma2} is norm separable,
 when $(X,\|\cdot\|)$ is a Banach space and $\tau$ is the weak topology of $X$.
 Consequently, Theorem \ref{mainThm} holds also for a nonseparable subset $K$ in this case.
 This provides a new proof of the following result.

\begin{Proposition}[Muoi and Wong {\cite[Theorem 2.1]{MuoiWong2020}}]
	Let $S$ be a right reversible semitopological semigroup.
	Assume that $\operatorname{LUC}(S)$ has a left invariant mean. Then $S$ has the following fixed point property.
	\begin{quote}
		$\mathbf{(F^{\sup}_{jwc})}$ Every  super asymptotically nonexpan\-sive and jointly weakly continuous action of  $S$
		on a weakly compact  convex subset  of a Banach space has a common fixed point.
	\end{quote}
\end{Proposition}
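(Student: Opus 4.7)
The plan is to reduce this proposition to Theorem \ref{mainThm} by relaxing the separability hypothesis, using the remark just made about Radon measure supports. Take $d$ to be the norm metric on the Banach space $X$ and $\tau$ the weak topology on $X$. Then $\tau$ is $d$--admissible, since the norm is weakly lower semicontinuous, so the whole machinery of Section \ref{Sect2} is available. Given a weakly compact convex subset $K$ (not assumed norm separable) and a jointly weakly continuous, super asymptotically nonexpansive action $S \times K \to K$, apply Lemma \ref{lemma1} to produce a minimal nonempty weakly compact convex subset $L_0 \subseteq K$ satisfying $(\star 1)$ and $(\star 2)$, together with a minimal nonempty weakly closed $S$--invariant subset $Y \subseteq L_0$.

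Next, run the argument of Lemma \ref{lemma2}: the left invariant mean on $\operatorname{LUC}(S)$ induces a left invariant mean $\psi$ on $C(Y)$, which corresponds via the Riesz representation theorem to a Radon probability measure $\mu$ on the weakly compact set $Y$; by left invariance its support $Y_0 := \operatorname{supp}(\mu)$ is $S$--preserving, and by minimality $Y = Y_0$. The crucial new observation is that, by the cited result \cite[Theorem 4.3]{Joram72}, every finite Radon measure on a weakly compact subset of a Banach space has norm separable support; therefore $Y = \operatorname{supp}(\mu)$ is automatically \emph{norm separable}, even though $K$ need not be.

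Now that $Y$ is both $\tau$--compact and $d$--separable, the hypotheses of Lemma \ref{lem:3} are met (with $F = Y$), so $Y$ is norm compact and the weak and norm topologies agree on $Y$. From this point the proof of Theorem \ref{mainThm} goes through verbatim: if $Y$ were not a singleton, set $r = \operatorname{diam}(Y) > 0$, invoke Lemma \ref{DemarrFrechet} (in its Banach space incarnation, where the ``good'' metric is the norm) to produce $u \in \operatorname{conv}(Y)$ with $\sup_{y \in Y} \|u - y\| = r_0 < r$, and for $0 < \varepsilon < r - r_0$ form the set
\[
N_0 = L_0 \cap \bigcap_{y \in Y}\bar{B}[y,\, r_0 + \varepsilon].
\]
Verifying $(\star 1)$ and $(\star 2)$ for $N_0$ uses super asymptotic nonexpansiveness and right reversibility exactly as in the proof of Theorem \ref{mainThm}, and minimality of $L_0$ then forces $Y \subseteq N_0$, contradicting $\operatorname{diam}(Y) = r > r_0 + \varepsilon$. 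Hence $Y$ is a singleton, which is the desired common fixed point.

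The only genuinely new ingredient beyond Theorem \ref{mainThm} is the separability of $Y$, so the expected obstacle is simply to make sure that the Radon measure $\mu$ from Lemma \ref{lemma2} is legitimately a Radon (and not merely Borel) measure on $Y$; this is automatic because $Y$ is compact Hausdorff and $\psi$ is a positive linear functional on $C(Y)$, so the standard Riesz representation gives a regular Borel probability measure, to which \cite[Theorem 4.3]{Joram72} applies.
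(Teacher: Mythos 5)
Your proposal is correct and follows exactly the paper's own route: the paper proves this proposition by observing (via Lindenstrauss's result that finite Radon measures on weakly compact sets have norm separable support) that the minimal set $Y$ from Lemma \ref{lemma2} is automatically norm separable, and then invoking Theorem \ref{mainThm} with $d$ the norm metric and $\tau$ the weak topology. Your write-up merely spells out in more detail the same reduction, so there is nothing to add.
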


Recall that for a discrete semitopological semigroup $S$,
the condition that $\operatorname{LUC}(S)$ has a $\operatorname{LIM}$ is strictly stronger than
that $S$ is left reversible (see \cite[page 2549]{LauZhang08}), while in general it might not be the case.

\begin{Lemma}[Lau and Zhang {\cite[Lemma 3.4]{LauZhang12}}]\label{left-reversibility}
Let S be left reversible semitopological semigroup. Consider an action $S\times Y\mapsto Y$ of $S$ on a compact subset $Y$ of a locally convex space $X$. Then
\begin{itemize}
	\item [(i)] there is a closed subset $F$ of $Y$ such that $F\subset sF$ for all $s\in S$ if the action is separately continuous;
	\item [(ii)] there is a closed subset $F$ of $Y$ such that $sF=F$ for all $s\in S$ if the action is jointly continuous.
\end{itemize}
\begin{proof}
	We give a different proof than \cite[Lemma 3.4]{LauZhang12}, since our approach is more in line with the reasoning in this paper.  Following an idea in \cite{MIT70Rever}, see also \cite[Lemma 4]{HolLau71}, let $F=\bigcap \{sY: s\in S\}$ where each $sY$ is compact.
	Consider any finite collection $\{s_1Y, s_2Y,\ldots,s_nY\}$. By the left reversibility of $S$, there is a $t\in S$ such that $t\in\bigcap_{i=1}^n\overline{s_iS}$. We have
	\begin{equation}\label{eq27}
	\bigcap_{i=1}^ns_iY\supset\bigcap_{i=1}^n\overline{s_i(SY)}\supset\bigcap_{i=1}^n\overline{s_iS}Y\supset tY\neq\emptyset.
	\end{equation}
	It follows that $F$ is nonempty.
	
	We claim that $F\subset sF$ for all $s\in S$. We need to prove that $y\in sF$ whenever $y\in F$ and $s\in S$.
	Consider any finite collection $\{s_1Y, s_2Y,\ldots,s_nY\}$, from \eqref{eq27} we have
	$$\left(L_s^{-1}\{y\}\right)\cap\bigcap_{i=1}^ns_iY\supset\left(L_s^{-1}\{y\}\right)\bigcap tY\neq\emptyset,$$
	since $y\in F\subset stY$ then there is a $x\in Y$ such that $y=stx$, hence $tx\in L_s^{-1}\{y\}$. By the finite intersection property, $L_s^{-1}\{y\}\bigcap F\neq\emptyset$. Consequently, $y\in sF$. That proves our claim.
	
	When the action is jointly continuous, we show that $F$ is $S$-invariant.
	We need to prove that $a.x\in F$ whenever $x\in F$.
	To see this, for any $b\in S$, let $e\in S$ such that $e\in \overline{aS}\cap\overline{bS}$.
	There exist nets $\{c_\lambda\}_\lambda$ and $\{d_\lambda\}_\lambda$ in $S$
	such that $\{ac_\lambda\}_\lambda$, $\{bd_\lambda\}_\lambda$ converge to $e$.  Since
	$x\in F=\bigcap \{sY: s\in S\}$,  there is a $x_\lambda\in Y$ such that $x=c_\lambda x_\lambda$ for every $\lambda$.
	By the weak compactness of $Y$, we can assume $\{x_\lambda\}$ converges to some $x_0\in Y$.
	Therefore,
	\begin{align}\label{ax-joint-cont}
	a.x=(ac_\lambda)x_\lambda\rightarrow e.x_0
	\end{align}
	by the
	joint  continuity of the action. This implies $a.x=e.x_0$.
	Since $bd_\lambda x_0\rightarrow e.x_0$, we have $a.x=e.x_0\in \overline{bS}Y\subset \overline{bSY}\subset \overline{bY} =  bY$,
	since  $bY$ is compact. Consequently, $sF=F$ for all $s\in S$.	
\end{proof}	
\end{Lemma}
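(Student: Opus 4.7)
The plan is to take $F=\bigcap_{s\in S}sY$. Under separate continuity each left translation $L_s\colon Y\to Y$ is continuous, so each $sY$ is compact and hence closed; it remains to show this family has the finite intersection property (so that $F\neq\emptyset$ by compactness of $Y$), and then that $F\subseteq sF$ (for (i)) and $sF\subseteq F$ (the extra content of (ii)).

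First I would upgrade left reversibility from pairs to arbitrary finite families. The key observation is that if $x\in\overline{aS}\cap\overline{bS}$, then separate continuity of right multiplication gives $xS\subseteq\overline{aS}\cap\overline{bS}$, so this intersection is itself a closed right ideal; inductively this yields $\bigcap_{i=1}^{n}\overline{s_iS}\neq\emptyset$. Picking $t$ in that intersection together with nets $s_iu_\lambda^{(i)}\to t$, separate continuity of the action in the first variable gives $t\cdot y=\lim s_i(u_\lambda^{(i)}\cdot y)\in s_iY$ for every $y\in Y$ and every $i$, whence $tY\subseteq\bigcap_{i=1}^{n}s_iY$. The FIP, and thus nonemptiness of $F$, follows.

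For (i), given $y\in F$ and $s\in S$ I want $L_s^{-1}\{y\}\cap F\neq\emptyset$. Re-running the previous argument with a finite collection $\{s_iY\}$, choose $t\in S$ with $tY\subseteq\bigcap_i s_iY$; since $y\in F\subseteq(st)Y$, write $y=stu$ for some $u\in Y$, so that $tu\in L_s^{-1}\{y\}\cap\bigcap_i s_iY$. All the sets $L_s^{-1}\{y\}\cap s_iY$ are closed in the compact space $Y$, so by FIP $L_s^{-1}\{y\}\cap F\neq\emptyset$, which is exactly $y\in sF$.

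For (ii) I must further show $sF\subseteq F$. Fix $x\in F$, $s\in S$, and an arbitrary $t\in S$; by left reversibility pick $e\in\overline{sS}\cap\overline{tS}$ together with nets $sc_\lambda\to e$ and $td_\lambda\to e$ in $S$. Since $x\in c_\lambda Y$, I can write $x=c_\lambda x_\lambda$ with $x_\lambda\in Y$ and, by compactness, pass to a subnet with $x_\lambda\to x_0\in Y$. The crux of the proof, and the step I expect to be the main obstacle, is the passage to the limit $sx=(sc_\lambda)\cdot x_\lambda\to e\cdot x_0$: both coordinates of the action vary simultaneously here, so separate continuity is insufficient and full \emph{joint} continuity of $S\times Y\to Y$ is precisely what is needed (this is the only place where the hypothesis of (ii) is used beyond that of (i)). The symmetric computation $td_\lambda\cdot x_0\to e\cdot x_0$ needs only separate continuity in the first variable, and since $tY$ is closed we obtain $sx=ex_0\in tY$; as $t$ was arbitrary, $sx\in F$.
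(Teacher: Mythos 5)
Your proof is correct and follows essentially the same route as the paper: the same set $F=\bigcap_{s\in S}sY$, the same finite-intersection-property argument via left reversibility for nonemptiness and for $F\subset sF$, and the same joint-continuity limit $s\cdot x=(sc_\lambda)\cdot x_\lambda\to e\cdot x_0$ to get $sF\subseteq F$. Your explicit justification that $\bigcap_{i=1}^{n}\overline{s_iS}\neq\emptyset$ (by observing that $\overline{aS}\cap\overline{bS}$ is itself a nonempty closed right ideal and inducting) fills in a step the paper leaves implicit.
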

The following result supplements Theorem \ref{mainThm}.  The key point in its proof is that we can bypass Lemma \ref{lemma2}.

\begin{Theorem}\label{ThmReversible}
		Let $S$ be a reversible semitopological semigroup and $(X,d)$ be a  Fr\'{e}chet space with a ``good'' metric $d$.
Let $\tau$ is a $d$--admissible locally convex topology of $X$. Then $S$ has the following fixed point property.
	\begin{quote}
		$\mathbf{(F^{\sup}_{sc,sep})}$ Every  super asymptotically $d$--nonexpan\-sive and separately $\tau$ continuous  action of  $S$
		on a $d$--separable and $\tau$--compact convex subset $K$ of $X$  has a common fixed point.
	\end{quote}
\end{Theorem}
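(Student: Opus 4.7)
The plan is to follow the architecture of the proof of Theorem \ref{mainThm}, replacing the role of $Y$ (which was promoted to an $S$--preserving, $d$--compact set via Lemma \ref{lemma2}) by a smaller subset $F$ produced directly by the left reversibility. Since $S$ is assumed reversible in both senses, I would first apply Lemma \ref{lemma1} to obtain a minimal nonempty $\tau$--compact convex $L_0\subseteq K$ satisfying $\mathbf{(\star 1)},\mathbf{(\star 2)}$, together with a minimal nonempty $\tau$--closed $S$--invariant subset $Y\subseteq L_0$. Then Lemma \ref{left-reversibility}(i), applied to the separately $\tau$--continuous action on the $\tau$--compact $Y$, supplies a nonempty $\tau$--closed $F\subseteq Y$ with $F\subseteq sF$ for every $s\in S$, and Lemma \ref{lem:3} now makes $F$ itself $d$--compact.

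Next I would dispose of the trivial case: if $F=\{x_0\}$ is a singleton, then $x_0\in sF=\{s.x_0\}$ forces $s.x_0=x_0$ for every $s\in S$, giving the required common fixed point. Otherwise set $r=\operatorname{diam}(F)>0$ and invoke the metric DeMarr Lemma \ref{DemarrFrechet} to produce a point $u\in \operatorname{conv}(F)\subseteq L_0$ with $\sup_{y\in F}d(u,y)=r_0<r$. Fix any $0<\varepsilon<r-r_0$ and form
\[
N_0=L_0\cap\bigcap_{y\in F}\bar{B}[y,r_0+\varepsilon].
\]
Each ball $\bar{B}[y,r_0+\varepsilon]$ is $\tau$--closed, because $d$ is $\tau$--lower semicontinuous, so $N_0$ is a $\tau$--closed convex subset of $L_0$ which is nonempty (it contains $u$) and is presented as a $\mathbf{(\star 1)}$--type intersection.

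The decisive step, and the one I expect to be technically the most delicate, is verifying that $N_0$ still satisfies $\mathbf{(\star 2)}$, since this is precisely where the proof of Theorem \ref{mainThm} leaned on $sY=Y$, which we no longer have. Fix $x\in N_0$ and a ball atom $\bar{B}[y',r_0+\varepsilon]$ with $y'\in F$. Pick any $t\in S$; super asymptotic $d$--nonexpan\-siveness at $(x,t)$ yields a left ideal $I_x^t$ with $d(st.y,st.x)\leq d(y,x)$ for all $s\in I_x^t$ and $y\in K$. For a $\Lambda_i$--atom of $L_0$ I would also take a left ideal $I\subseteq S$ with $I.x\subseteq\Lambda_i$ from the $\mathbf{(\star 2)}$--property of $L_0$, and then right reversibility furnishes some $t_0\in \overline{I}\cap\overline{I_x^t\,t}$. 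A net argument combined with the $\tau$--lower semicontinuity of $d$ propagates the estimate to $d(st_0.y,st_0.x)\leq d(y,x)$ for all $s\in S$ and $y\in K$, while $St_0.x\subseteq\Lambda_i$ follows because $\Lambda_i$ is $\tau$--closed. Finally, the relation $F\subseteq (st_0)F$ from Lemma \ref{left-reversibility}(i) lets me write each $y'\in F$ as $st_0.y$ for some $y\in F$, so
\[
d(y',st_0.x)=d(st_0.y,st_0.x)\leq d(y,x)\leq r_0+\varepsilon,
\]
meaning that the single left ideal $St_0$ witnesses $\mathbf{(\star 2)}$ for every ball atom at once. The minimality of $L_0$ then forces $F\subseteq L_0=N_0\subseteq\bigcap_{y\in F}\bar{B}[y,r_0+\varepsilon]$, so $\operatorname{diam}(F)\leq r_0+\varepsilon<r$, the desired contradiction. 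The principal obstacle is exactly this simultaneous $\mathbf{(\star 2)}$--verification, which must interlock right reversibility, super asymptotic $d$--nonexpan\-siveness, the $\tau$--lower semicontinuity of $d$, and the $F\subseteq sF$ consequence of left reversibility.
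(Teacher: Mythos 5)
Your proposal is correct and follows essentially the same route as the paper: Lemma \ref{lemma1} for $L_0$ and $Y$, Lemma \ref{left-reversibility}(i) for a $\tau$--closed $F\subseteq Y$ with $F\subseteq sF$, Lemma \ref{lem:3} for the $d$--compactness of $F$, and then the DeMarr/minimality argument of Theorem \ref{mainThm} with $Y$ replaced by $F$. The paper states this last step only as ``the remaining part follows similarly''; your verification that $F\subseteq sF$ suffices in place of $sY=Y$ for both the singleton case and the $\mathbf{(\star 2)}$--check is exactly the intended adaptation.
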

\begin{proof}
	By  Lemma \ref{lemma1}, there is a subset $L_0$ of $K$ which is minimal with respect to being nonempty, $\tau$--compact,
	convex and satisfying conditions $\mathbf{(\star1)}$ and $\mathbf{(\star2)}$. Moreover, $L_0$ contains
	a subset $Y$ that is minimal with respect to being nonempty, $\tau$--compact and $S$-invariant. By Lemmas \ref{left-reversibility} and \ref{lem:3}, there is a $d$--compact subset $F$ of $Y$ such that $F\subset sF$ for all $s\in S$.
	The remaining part
follows similarly as in the proof of Theorem \ref{mainThm} where the set $Y$ is replaced by its $d$--compact subset $F$.
\end{proof}

\begin{Theorem}\label{APWAP}
	Let $S$ be a right reversible semitopological semigroup and $(X,d)$ be a  Fr\'{e}chet space with a ``good'' metric $d$.
Let $\tau$ be a $d$--admissible locally convex topology on $X$.
	\begin{itemize}
		\item [(i)] Assume $\operatorname{AP}(S)$ has a $\operatorname{LIM}$. Then every super asymptotically $d$--nonexpan\-sive,
separately $\tau$ continuous, and $\tau$ equicontinuous  action of  $S$
		on a $d$--separable and $\tau$--compact convex subset $K$ of $X$  has a common fixed point.
		\item [(ii)] Assume $\operatorname{WAP}(S)$ has a $\operatorname{LIM}$. Then every
super asymptotically $d$--nonexpan\-sive,  separately $\tau$ continuous, and $\tau$ quasi-equicontinuous action of  $S$
		on a $d$--separable and $\tau$--compact convex subset $K$ of $X$  has a common fixed point.
	\end{itemize}
	
\end{Theorem}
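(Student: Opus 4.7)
The plan is to reuse the scheme of Theorem \ref{mainThm} almost verbatim. The only place in that proof where joint $\tau$-continuity was essential is Lemma \ref{lemma2}, where it was used to guarantee that, for each $y\in Y$ and $f\in C(Y)$, the function $R_y f : s\mapsto f(s.y)$ lies in $\operatorname{LUC}(S)$, so that the left invariant mean could be applied to it. Hence the entire argument will go through once we establish the analogous memberships $R_y f\in\operatorname{AP}(S)$ under hypothesis (i) and $R_y f\in\operatorname{WAP}(S)$ under hypothesis (ii), and then apply the given $\operatorname{LIM}$ on $\operatorname{AP}(S)$ (resp.\ $\operatorname{WAP}(S)$) in place of the one on $\operatorname{LUC}(S)$.

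For case (i), I would start from the identity
$$
l_s(R_y f)(t)=R_y f(st)=f((st).y)=(f\circ L_s)(t.y),
$$
that is, $l_s(R_y f)=R_y(f\circ L_s)$. The assumed $\tau$-equicontinuity of $\{L_s:s\in S\}$ on the $\tau$-compact set $Y$, together with the $\tau$-continuity of $f$ on $Y$, makes $\{f\circ L_s:s\in S\}$ a uniformly bounded, equicontinuous subset of $C(Y)$. Arzel\`a-Ascoli then yields that this family is relatively norm compact in $C(Y)$. Since $\|R_y g\|_\infty\leq \|g\|_{C(Y)}$, the map $g\mapsto R_y g$ from $C(Y)$ into $\operatorname{CB}(S)$ is norm continuous, so $\{l_s(R_y f):s\in S\}$ is relatively norm compact in $\operatorname{CB}(S)$, which is precisely the statement $R_y f\in \operatorname{AP}(S)$.

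Case (ii) proceeds along the same lines and is the main obstacle. The working definition of $\tau$-quasi-equicontinuity I would use is that every pointwise cluster point of $\{L_s\}$ in $Y^Y$ is a $\tau$-continuous selfmap of $Y$; combined with the $\tau$-continuity of $f$ this forces every pointwise cluster point of $\{f\circ L_s\}$ to lie in $C(Y)$. Coupled with the uniform boundedness of the family, Grothendieck's iterated-limit criterion for weak compactness in $C(Y)$ would then give that $\{f\circ L_s:s\in S\}$ is relatively weakly compact in $C(Y)$. The linear map $g\mapsto R_y g$ is norm continuous and hence weak-to-weak continuous, so $\{l_s(R_y f):s\in S\}$ is relatively weakly compact in $\operatorname{CB}(S)$, which gives $R_y f\in\operatorname{WAP}(S)$. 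Pinning down the exact interpretation of ``$\tau$-quasi-equicontinuity'' that is strong enough to supply the iterated-limit condition, and then verifying that condition without appealing to joint continuity, is where the real work lies.

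Once $R_y f$ is in the correct translation-invariant subspace of $\operatorname{CB}(S)$, the remainder of the proof of Lemma \ref{lemma2} carries over without change: $\psi(f):=m(R_y f)$ defines a left invariant mean on $C(Y)$, the corresponding Radon probability measure has a support $Y_0$ with $s.Y_0=Y_0$ for every $s\in S$, and the minimality of $Y$ forces $Y=Y_0$, so $Y$ is $S$-preserving. Lemma \ref{lem:3}, which needs only separate $\tau$-continuity and super asymptotic $d$-nonexpan\-siveness, then promotes $Y$ to a $d$-compact set, and the DeMarr-type Lemma \ref{DemarrFrechet} together with the $N_0$-contradiction scheme at the end of the proof of Theorem \ref{mainThm} collapses $Y$ to a single point, the desired common fixed point of the action.
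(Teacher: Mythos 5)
Your proposal is correct and follows essentially the same route as the paper: the paper's proof of Theorem \ref{APWAP} simply cites \cite[Lemma 3.1]{Lau73Rocky} and \cite[Theorem 3.4]{LauZhang08} to obtain that $s\mapsto f(s.y)$ lies in $\operatorname{AP}(S)$ (resp.\ $\operatorname{WAP}(S)$) under $\tau$-equicontinuity (resp.\ $\tau$-quasi-equicontinuity), which is exactly what you reconstruct via Arzel\`a--Ascoli and Grothendieck's double-limit criterion, and then reruns the argument of Theorem \ref{mainThm}. Your identification of Lemma \ref{lemma2} as the sole point where joint continuity enters, and your working definition of quasi-equicontinuity (all pointwise cluster points of $\{L_s\}$ in $Y^Y$ are continuous), match the cited sources, so no gap remains.
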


\begin{proof}
	These are direct consequences of
\cite[Lemma 3.1]{Lau73Rocky}, \cite[Theorem 3.4]{LauZhang08}, and the proof of Theorem \ref{mainThm}.
	\end{proof}
	
\begin{Corollary}\label{corr.Normal}
  Let $S$ be a semitopological semigroup as well as a normal space. Let $(X,d)$ be a  Fr\'{e}chet space with a ``good'' metric $d$.
  Let $\tau$ be a $d$--admissible locally convex topology on $X$.
	 Assume that $\operatorname{CB}(S)$ has an invariant mean. Then $S$ has the fixed point property $\mathbf{F^{\sup}_{sc,sep}}$.
	
\end{Corollary}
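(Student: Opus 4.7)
The plan is to derive the corollary directly from Theorem \ref{ThmReversible}, whose only hypothesis on $S$ is reversibility (both left and right). So the real content of the corollary is to verify that, under the normality of $S$ and the existence of a two-sided invariant mean on $\operatorname{CB}(S)$, $S$ is in fact reversible; once this is done, the fixed point property $\mathbf{F^{\sup}_{sc,sep}}$ follows immediately from Theorem \ref{ThmReversible}.

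For left reversibility, I will argue by contradiction. Suppose $I$ and $J$ are two disjoint nonempty closed right ideals of $S$. Since $S$ is normal, Urysohn's lemma yields a function $f\in\operatorname{CB}(S)$ with $f\equiv 1$ on $I$ and $f\equiv 0$ on $J$. For any $s\in I$, the right-ideal property gives $sS\subseteq I$, hence $l_sf(t)=f(st)=1$ for every $t\in S$, so $l_sf\equiv 1$; symmetrically $l_sf\equiv 0$ whenever $s\in J$. If $m$ is a $\operatorname{LIM}$ on $\operatorname{CB}(S)$ then left invariance gives the two incompatible equalities $m(f)=m(l_sf)=1$ (take $s\in I$) and $m(f)=0$ (take $s\in J$), a contradiction. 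The symmetric argument, applied with a right invariant mean and right translates $r_sf(t):=f(ts)$, rules out disjoint nonempty closed left ideals and hence gives right reversibility.

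Combining the two reversibilities, $S$ satisfies the hypotheses of Theorem \ref{ThmReversible}, which supplies the desired fixed point property. I expect the main conceptual ingredient to be the role of normality: it upgrades the classical Mitchell-style indicator argument, which works directly in $l^\infty(S)$, to the continuous-function space $\operatorname{CB}(S)$ by producing a Urysohn separator inside $\operatorname{CB}(S)$. In the purely discrete setting this step is automatic, and one recovers at once that every discrete semigroup with an invariant mean on $l^\infty(S)=\operatorname{CB}(S)$ is reversible; the corollary is thus the natural semitopological analogue.
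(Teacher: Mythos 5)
Your proposal is correct and matches the paper's own (very terse) argument: the paper likewise reduces the corollary to Theorem \ref{ThmReversible} by observing that normality plus an invariant mean on $\operatorname{CB}(S)$ forces reversibility, via exactly the Urysohn-separator version of Mitchell's indicator argument that you spell out. If anything, you are slightly more complete than the paper, which explicitly records only the right-reversibility half and leaves the symmetric half implicit.
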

\begin{proof}
It is  known that if $S$ is normal and $CB(S)$ has a right invariant mean then $S$ is right reversible.
The assertion  follows similarly as in proving Theorem \ref{ThmReversible}.
\end{proof}

We now discuss  $Q$--nonexpansive actions.
Let $(X,Q)$ be a  Fr\'{e}chet space in which $Q=\{q_n: n\in\mathbb{N}\}$
is a countable family of seminorms defining the metric topology.
A locally convex topology $\tau$ on $X$ is said to be \emph{$Q$--admissible} (\cite{LauTaka09}) if
$\tau$ is weaker than the $Q$--topology while every seminorm $q_n$ in $Q$ is $\tau$--lowersemicontinuous.

Note  that
one cannot use \eqref{sumMetric} to define a metric and apply Theorem \ref{mainThm} to get the following result,
  as the metric so defined might not be ``good''.

\begin{Corollary}\label{cor:unif-Q}
Let $S$ be a right reversible semitopological semigroup. Let $(X,Q)$ be a  Fr\'{e}chet space
with a $Q$--admissible locally convex topology $\tau$.
Assume that $\operatorname{LUC}(S)$ has a left invariant mean. Then $S$ has the following fixed point property.
\begin{quote}
	 Every  super asymptotically uniformly $Q$--nonexpan\-sive and jointly $\tau$ continuous  action of  $S$
	on a $\tau$--compact convex and $Q$--separable subset $K$ of $X$  has a common fixed point.
\end{quote}
\end{Corollary}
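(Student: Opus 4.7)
The plan is to reduce the corollary to Theorem \ref{mainThm} by constructing a ``good'' metric $d$ on $X$ directly out of the countable seminorm family $Q$, in such a way that super asymptotic uniform $Q$-nonexpan\-siveness automatically passes to super asymptotic $d$-nonexpan\-siveness. First I would replace $Q=\{q_n\}$ by the equivalent increasing family $p_n=\max_{k\leq n}q_k$; this preserves both the generated topology and the super asymptotic uniform $Q$-nonexpan\-siveness, since the left ideal $I^t_x$ in that definition does not depend on the seminorm. Relabelling $p_n$ as $q_n$, I would then define
\begin{equation*}
d(x,y)=\sup_{n\in\mathbb{N}}\min\bigl(q_n(x-y),\,2^{-n}\bigr).
\end{equation*}

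Next I would verify that $d$ is a ``good'' metric defining the same topology as $Q$. Translation invariance and symmetry are immediate, and the triangle inequality follows from the elementary estimate $\min(a+b,c)\leq\min(a,c)+\min(b,c)$ applied termwise and then taking suprema. The decisive point is the absolute convexity of the open ball, which can be written as
\begin{equation*}
B_r(0)=\bigcup_{0<r'<r}\ \bigcap_{n\,:\,2^{-n}>r'}\{x\in X:q_n(x)\leq r'\}.
\end{equation*}
This is an increasing nested union of absolutely convex sets, each being a finite intersection of absolutely convex $q_n$-balls; since a nested union of absolutely convex sets is absolutely convex, $B_r(0)$ is absolutely convex. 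A routine comparison of $d$-balls with $Q$-neighborhoods shows $d$ and $Q$ generate the same topology and the same uniformity, so $(X,d)$ is Fr\'echet.

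It then remains to verify the hypotheses of Theorem \ref{mainThm}. The topology $\tau$ is $d$-admissible: it is weaker than the $d$-topology (which equals the $Q$-topology), and since each $q_n$ is $\tau$-lower semicontinuous, so is each $\min(q_n(\cdot-\cdot),2^{-n})$, and the pointwise supremum $d$ of $\tau$-lower semicontinuous functions is again $\tau$-lower semicontinuous. The set $K$ is $d$-separable because the two topologies coincide. Finally, if $q_n(st.x-st.y)\leq q_n(x-y)$ holds for all $n$ and all $s\in I^t_x$, then $\min(q_n(st.x-st.y),2^{-n})\leq\min(q_n(x-y),2^{-n})$ for each $n$, and taking the supremum gives $d(st.x,st.y)\leq d(x,y)$ on $I^t_x$; thus the action is super asymptotically $d$-nonexpan\-sive. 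Theorem \ref{mainThm} then delivers the common fixed point.

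The main subtlety I expect is precisely the absolute convexity of $B_r(0)$, which is exactly where the naive sum metric \eqref{sumMetric} fails. One cannot simply add capped terms like $\min(q_n,2^{-n})$ weighted by $2^{-n}$, since these terms are not seminorms and the resulting ball need not be convex. The ``sup-of-capped-seminorms'' construction above circumvents this by exhibiting $B_r(0)$ as a monotone union of honest absolutely convex intersections of $q_n$-balls.
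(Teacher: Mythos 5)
Your proof is correct, and it follows the same overall strategy as the paper: both arguments manufacture a ``good'' translation-invariant metric $d$ out of the countable family $Q$ so that super asymptotic uniform $Q$--nonexpansiveness transfers to super asymptotic $d$--nonexpansiveness, and then invoke Theorem \ref{mainThm}. The difference is entirely in how the metric is built. The paper first normalizes so that $4q_n\leq q_{n+1}$, takes the basis $V_n=\{q_n<1\}$, and runs the Birkhoff--Kakutani/Rudin construction (\cite[Theorem 1.24]{Rudin91}) in which the open balls are unions of Minkowski sums $A(r)=c_1(r)V_1+c_2(r)V_2+\cdots$ indexed by dyadic rationals; absolute convexity of the balls is inherited from that classical construction. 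You instead pass to the increasing family $p_n=\max_{k\leq n}q_k$ (legitimate here precisely because in the \emph{uniform} definition the ideal $I^t_x$ is independent of the seminorm --- this is the one point where your reduction would break for the separately nonexpansive version, which is why the paper treats that case separately in Theorem \ref{thm:main-Q}) and write down the explicit formula $d=\sup_n\min(q_n(\cdot-\cdot),2^{-n})$. Your identification of the open ball as a nested union of finite intersections $\bigcap_{n:2^{-n}>r'}\{q_n\leq r'\}$ is exactly right (the index set is finite, the union is increasing in $r'$), and it makes the absolute convexity --- the property that the naive sum metric \eqref{sumMetric} lacks --- completely transparent; the remaining checks (same topology and uniformity, $\tau$-lower semicontinuity of $d$ as a sup of capped $\tau$-lsc seminorms, transfer of nonexpansiveness termwise) all go through as you state. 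Your construction buys self-containedness and an elementary, verifiable convexity argument; the paper's buys brevity by citing Rudin. Both are sound.
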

\begin{proof}
Let $Q=\{q_n: n=1,2,\ldots\}$.  Without loss of generality, by summing and scaling,
we can assume that $4q_n(x)\leq q_{n+1}(x)$ for all $x\in X$ and $n=1,2,\ldots$.
Then the absolutely convex open sets $V_n=\{x\in X: q_n(x) < 1\}$, $n=1,2,\ldots$, constitute a local basis of zero of $X$,
 such that $V_{n+1} + V_{n+1} + V_{n+1} + V_{n+1} \subseteq V_n$ for $n=1,2,\ldots$.
 Let $D$ be the set of rational numbers $r$ in $(0,1)$ such that $r=\sum_{n} c_n(r)2^{-n}$,
  where the binary digit $c_n(r)$ assumes either $0$ or $1$, and among them only finitely many $c_n(r)$ are $1$.
Let $A(r)=c_1(r)V_1 + c_2(r)V_2 + \cdots$ for any rational number $r\in D$ and $A(r)=X$ for $r\geq 1$.
Following the proof of  \cite[Theorem 1.24]{Rudin91}, we can define a ``good'' metric $d$ of $X$ defining its topology such that
the open metric balls $B_\delta(0)=\{x\in X: d(0,x) < \delta\} = \bigcup \{ A(r): r\in D, 0<r< \delta\}$. It is not difficult
to see that   $Q$--admissible locally convex topologies of $X$ are exactly those being $d$-admissible,
 and super asymptotically (resp.\ asymptotically) uniformly $Q$--nonexpansive actions on any subset $K$ of $X$ are exactly those being
super asymptotically (resp.\ asymptotically)  $d$--nonexpansive.  Consequently, we can apply Theorem \ref{mainThm}.
\end{proof}

However, we have an even better version of Corollary \ref{cor:unif-Q} in the following, which works for super asymptotically
\emph{separately} $Q$--nonexpansive actions.

\begin{Theorem}\label{thm:main-Q}
 Let $S$ be a right reversible semitopological semigroup. Let $(X,Q)$ be a  Fr\'{e}chet space
with a $Q$--admissible locally convex topology $\tau$.
Assume that $\operatorname{LUC}(S)$ has a left invariant mean. Then $S$ has the following fixed point property.
\begin{quote}
	$\mathbf{(F^{\sup}_{jc,sQsep})}$ Every  super asymptotically separately $Q$--nonexpan\-sive and jointly $\tau$ continuous  action of  $S$
	on a $\tau$--compact convex and $Q$--separable subset $K$ of $X$  has a common fixed point.
\end{quote}
\end{Theorem}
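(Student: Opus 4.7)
The strategy is to parallel the proof of Theorem \ref{mainThm}, carrying along one fixed seminorm $q \in Q$ whenever the nonexpansive property is invoked, since the separately $Q$--nonexpansive assumption only supplies a left ideal that depends on the seminorm chosen. First I would apply Lemma \ref{lemma1} to obtain a $\tau$--compact minimal convex set $L_0 \subseteq K$ satisfying $(\star 1)$ and $(\star 2)$, together with its minimal $\tau$--closed $S$--invariant subset $Y \subseteq L_0$, and then Lemma \ref{lemma2} to upgrade $Y$ to being $S$--preserving. Both lemmas use only joint $\tau$--continuity and the LIM of $\operatorname{LUC}(S)$, so they go through unchanged.

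The next task is to show that $Y$ is compact in the Fréchet topology. To this end I would derive an approximate $d$--nonexpansive property for the Fréchet metric $d(x,y) = \sum_{n=1}^\infty 2^{-n} q_n(x-y)/(1+q_n(x-y))$: given $x \in K$, $t \in S$, and $\eta > 0$, choose $N$ with $2^{-N} < \eta$ and, by right reversibility applied to the closed left ideals $\overline{I^{q_n,t}_x}$ for $n \leq N$, form the nonempty closed left ideal $I^t_{x,\eta} := \bigcap_{n \leq N} \overline{I^{q_n,t}_x}$. Joint $\tau$--continuity of the action and $\tau$--lower semicontinuity of each $q_n$ then give $q_n(st.x - st.y) \leq q_n(x-y)$ for every $s \in I^t_{x,\eta}$, $n \leq N$, and $y \in K$, whence $d(st.x, st.y) \leq d(x,y) + \eta$. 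This is the delicate step, since no single left ideal can control all infinitely many seminorms simultaneously; with the $\eta$--slack in hand, however, the argument of Lemma \ref{lem:3} still goes through by choosing $\eta$ and the $d$--ball radius $\delta$ small enough that $B_d(0,\delta+\eta) \subseteq V_1$, so the final covering balls inflate only by a uniform amount $\eta$. Shrinking parameters shows $Y$ is $d$--totally bounded, and since $Y$ is $\tau$--closed, hence $d$--closed, in the complete space $(X,d)$, it is $d$--compact, equivalently $Q$--compact.

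Now suppose for contradiction that $Y$ has more than one point. By Hausdorffness of $(X,Q)$, some $q = q_n \in Q$ has $r := \sup\{q(x-y) : x,y \in Y\} > 0$. The proof of Lemma \ref{DemarrFrechet} adapts verbatim with $q$ in place of $d$: $Q$--compactness of $Y$ and $Q$--continuity of $q$ produce a finite maximal subset $M = \{x_1,\ldots,x_k\} \subseteq Y$ with pairwise $q$--distance $r$, so the barycenter $u = k^{-1}\sum x_i \in \operatorname{conv}(Y) \subseteq L_0$ satisfies $r_0 := \sup\{q(u-y) : y \in Y\} < r$. Fix $0 < \varepsilon < r - r_0$ and set
\[
N_{\varepsilon,\Lambda} = \Lambda \cap \bigcap_{y\in Y}\{x \in X : q(x-y) \leq r_0+\varepsilon\}, \quad N_0 = L_0 \cap \bigcap_{y\in Y}\{x \in X : q(x-y) \leq r_0+\varepsilon\}.
\]
Each $q$--ball is $\tau$--closed by $\tau$--lsc of $q$, so $N_0$ is $\tau$--compact, convex, and nonempty (it contains $u$). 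To verify $(\star 2)$ for $N_0$ relative to each $\Lambda_i$, take $x \in N_0$, let $I$ be the ideal from $L_0$'s $(\star 2)$, fix $t \in S$, invoke separately $Q$--nonexpansiveness with the single seminorm $q$ to get $I^{q,t}_x$, and use right reversibility to choose $t_0 \in \overline{I} \cap \overline{I^{q,t}_x t}$. Then $St_0.x \subseteq \Lambda_i$ by $\tau$--closedness of $\Lambda_i$, while a net $s_\lambda \in I^{q,t}_x$ with $s_\lambda t \to t_0$, joint $\tau$--continuity, and $\tau$--lsc of $q$ yield $q(st_0.y - st_0.x) \leq q(y-x) \leq r_0+\varepsilon$ for every $s \in S$ and $y \in Y$. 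The $S$--preservingness $Y = st_0.Y$ then places $st_0.x$ in $\{z : q(z - y') \leq r_0+\varepsilon\}$ for every $y' \in Y$, so $J := St_0$ satisfies $J.x \subseteq N_{\varepsilon,\Lambda_i}$. Thus $N_0$ is nonempty, $\tau$--compact, convex, satisfies $(\star 1)$ and $(\star 2)$, yet is a proper subset of $L_0$ since any $y_1, y_2 \in Y$ with $q(y_1 - y_2) > r_0 + \varepsilon$ witness $y_1 \in L_0 \setminus N_0$. This contradicts the minimality of $L_0$, forcing $|Y| = 1$, and the unique point of $Y$ is the desired common fixed point.
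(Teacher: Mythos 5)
Your proposal is correct and follows essentially the same route as the paper: Lemma \ref{lemma1} and Lemma \ref{lemma2} to get an $S$--preserving minimal set $Y$, a Lemma \ref{lem:3}--type argument for $Q$--compactness of $Y$, and then the DeMarr/minimality contradiction run with a single seminorm $q$ and the corresponding ideal $I^{q,t}_x$. The one step the paper leaves implicit (``with similar arguments as in Lemma \ref{lem:3}'') is exactly where you supply the needed detail, and your device of intersecting the finitely many closed ideals $\overline{I^{q_n,t}_x}$, $n\leq N$, via right reversibility to get the $\eta$--approximate $d$--nonexpansiveness is a correct way to carry that step out.
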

\begin{proof}
 	We adapt  the proof of Theorem \ref{mainThm} to the locally convex space setting.
	By Lemma \ref{lemma1}, there is a subset $L_0$ of $K$ which is minimal with respect to being nonempty, compact,
	convex and satisfying conditions $\mathbf{(\star1)}$ and $\mathbf{(\star2)}$. Moreover, $L_0$ contains
	a subset $Y$ that is minimal with respect to being nonempty, compact and $S$-invariant. Following Lemma \ref{lemma2}, we see that $Y$ is $S-$preserving.  With similar arguments as in Lemma \ref{lem:3}, we have $Y$ is $Q$-compact.
	
	If  $Y$ contains exactly one point  then we are done. Otherwise, there exists a seminorm $q$ in $Q$ such that
	$$
	r=\mbox{diam}_q (Y):=\sup\left\lbrace q(x-y): x, y\in Y\right\rbrace>0.
	$$	
	Following the arguments in the proof of Lemma \ref{DemarrFrechet}, noting also \cite[Lemma 1]{DeMarr63}, we see that there exists a  $u\in\operatorname{conv}(Y)$ such that
	$$r_0=\sup\left\lbrace q(u-y): y\in Y\right\rbrace<\sup\left\lbrace q(x-y): x,y\in Y\right\rbrace=r.$$
Let $0<\varepsilon<r-r_0$.
For each $\Lambda\in \left\lbrace\Lambda_i: i\in I\right\rbrace$  in Lemma \ref{lemma1}, we
set
\begin{align*}
N_{\varepsilon,\Lambda}&= \bigcap_{y\in Y} \{x\in \Lambda : q(x-y)\leq  r_0+\varepsilon\}
\intertext{and}
N_0&=\bigcap\left\lbrace N_{\varepsilon, \Lambda_i}:  \ i\in I\right\rbrace= L_0\cap \bigcap_{y\in Y}\bar{B}_q[y, r_0+\varepsilon],
\end{align*}
where $\bar{B}_q[y, \delta]$ denotes  the $q$--closed ball centered at $y$ of radius $\delta$. Following the arguments in the last part of the proof of Theorem \ref{mainThm}, we see that $N_0$ is nonempty, compact,
convex and satisfies conditions $\mathbf{(\star1)}$ and $\mathbf{(\star2)}$.

By the minimality of $L_0$, we have
$Y\subseteq L_0= N_0 \subseteq \bigcap_{y\in Y}\bar{B}_q[y, r_0+\varepsilon]$.
This gives us a contradiction that $\operatorname{diam}_q(Y) \leq r_0+\epsilon < r$.
Therefore, $Y$ contains a unique point and it is the common fixed point for the action of $S$ on $K$.
\end{proof}

In a similar manner, we will get the $Q$--nonexpansive versions of Theorems \ref{ThmReversible} and \ref{APWAP}, and Corollary \ref{corr.Normal}.

\begin{Theorem}\label{thm:AP-Q}
  	Let $S$ be a right reversible semitopological semigroup.
   Let $(X,Q)$ be a  Fr\'{e}chet space
with a $Q$--admissible locally convex topology $\tau$.
	\begin{itemize}
\item[(i)] Assume $S$ is reversible.  Then every  super asymptotically separately
$Q$--nonexpan\-sive and separately $\tau$ continuous  action of  $S$
		on a $Q$--separable and $\tau$--compact convex subset $K$ of $X$  has a common fixed point.
		\item [(ii)] Assume $\operatorname{AP}(S)$ has a $\operatorname{LIM}$. Then every separately $\tau$--continuous, $\tau$--equicontinuous and super asymptotically separately $Q$--nonexpan\-sive action of  $S$ on a $\tau$--compact convex subset $K$ of $X$ has a common fixed point.
		\item [(iii)] Assume $\operatorname{WAP}(S)$ has a $\operatorname{LIM}$. Then every $\tau$--separately continuous,
$\tau$--quasi-equicon\-tin\-uous and super asymptotically separately $Q$--nonexpan\-sive action of $S$
		on a $\tau-$compact convex subset $K$ of $X$  has a common fixed point.
	\end{itemize}	
\end{Theorem}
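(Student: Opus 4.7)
The plan is to adapt the proofs of Theorems \ref{ThmReversible} and \ref{APWAP} and Corollary \ref{corr.Normal} to the locally convex, super asymptotically separately $Q$--nonexpansive setting, following the template just established in Theorem \ref{thm:main-Q}.  For each of the three parts, I would begin by invoking Lemma \ref{lemma1} to extract the minimal $\tau$--compact convex set $L_0 \subseteq K$ satisfying conditions $\mathbf{(\star1)}$ and $\mathbf{(\star2)}$, together with the minimal nonempty $\tau$--closed $S$--invariant subset $Y \subseteq L_0$.

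For part (i), since $S$ is both left and right reversible, Lemma \ref{left-reversibility}(i) furnishes a nonempty $\tau$--closed $F \subseteq Y$ with $F \subseteq sF$ for every $s \in S$.  A seminorm version of Lemma \ref{lem:3} — obtained by running the same covering and totally-bounded argument with $q$--balls in place of $d$--balls and using the separate $Q$--nonexpansiveness hypothesis one seminorm at a time — then shows $F$ is compact with respect to each $q \in Q$, hence $Q$--compact.  From here the DeMarr-style contradiction at the end of the proof of Theorem \ref{thm:main-Q} carries over verbatim with $F$ in place of $Y$: if $F$ is not a singleton, pick $q \in Q$ with $\operatorname{diam}_q F > 0$, produce $u \in \operatorname{conv}(F)$ of strictly smaller $q$--radius, form the set $N_0$ cut out by $q$--balls, and violate the minimality of $L_0$.

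For parts (ii) and (iii), I would replace $\operatorname{LUC}(S)$ in Lemma \ref{lemma2} by $\operatorname{AP}(S)$ or $\operatorname{WAP}(S)$ respectively.  Under the additional $\tau$--equicontinuity hypothesis, the functions $R_y f : s \mapsto f(s.y)$ with $y \in Y$ and $f \in C(Y)$ lie in $\operatorname{AP}(S)$ by the argument of \cite[Lemma 3.1]{Lau73Rocky}; under $\tau$--quasi-equicontinuity they lie in $\operatorname{WAP}(S)$ by \cite[Theorem 3.4]{LauZhang08}.  Thus the given left invariant mean on $\operatorname{AP}(S)$ (resp.\ $\operatorname{WAP}(S)$) still induces a probability measure on $Y$ whose support is $S$--preserving, and by minimality $Y$ itself is $S$--preserving.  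The $Q$--version of Lemma \ref{lem:3} then makes $Y$ compact with respect to each $q \in Q$, and the same seminorm--diameter contradiction collapses $Y$ to a single point, the desired common fixed point.

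The main obstacle I anticipate is the $Q$--analogue of Lemma \ref{lem:3} under the \emph{separately} rather than uniformly $Q$--nonexpansive hypothesis: the covering argument produces finitely many translates $\{L_{t_0 t_i} z + U\}$ that must be controlled in one fixed seminorm $q$, and one must verify that the chain of left ideals $I^{q,t}_{x}$ — indexed by base points $x_i$, prefixes $t_i$, and the seminorm $q$ — can be selected consistently so that the right reversibility step still produces a single $t_0$ dominating all $n$ pieces in the same seminorm.  Once this bookkeeping is handled, the remainder is a routine transcription of the arguments already used in Theorems \ref{ThmReversible}, \ref{APWAP}, and \ref{thm:main-Q}.
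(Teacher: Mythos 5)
Your proposal matches the paper's intent exactly: the paper offers no explicit proof of Theorem~\ref{thm:AP-Q}, stating only that it follows ``in a similar manner'' from Theorems~\ref{ThmReversible} and~\ref{APWAP} via the template of Theorem~\ref{thm:main-Q}, and your elaboration --- Lemma~\ref{lemma1}, then Lemma~\ref{left-reversibility} for (i) or the $\operatorname{AP}/\operatorname{WAP}$ substitute for Lemma~\ref{lemma2} in (ii)--(iii), the seminorm-wise version of Lemma~\ref{lem:3} with the finitely many ideals $I^{q,t}_x$ intersected via right reversibility, and the DeMarr-type contradiction in a single seminorm --- is precisely that argument, including the correct identification of the only delicate bookkeeping step. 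The one point worth recording is that the $Q$--analogue of Lemma~\ref{lem:3} still requires $Q$--separability of $Y$ (hence of $K$), which parts (ii) and (iii) omit from their statements although it appears in the analogous Theorem~\ref{APWAP}, so that hypothesis should be understood as implicitly assumed there.
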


\section{Fixed point properties on locally convex spaces}\label{Sect3}

In this section, we consider  super asymptotically separately $Q$--nonexpan\-sive actions
of a semitopological semigroup $S$ on a
compact convex set $K$ in a general locally convex space  $(X,Q)$,
 in which $Q$ is a family
of seminorms defining the topology.
Note that we do not assume the metrizability or the completeness of $X$, and we do not assume the separability of $K$  either.

 We note that the results in \cite{HolLau71,LauZhang08,LauTaka09},
though  stated for asymptotically uniformly $Q$--nonexpan\-sive actions,
 hold indeed with the same proofs for
asymptotically \emph{separately} $Q$--nonexpan\-sive actions.
However, the following example tells us that the various separately $Q$--nonexpansiveness are strictly weaker than their
uniform versions.

\begin{Example}	\label{eg:sep-not-unif}
	Let $X$ be the Fr\'{e}chet space of all scalar
sequences $x=(x_n)$
equipped with the topology of coordinate-wise convergence; namely, it is the topology
defined by the countable family $Q$ of seminorms $q_n(x)=|x_n|$ for $n\in\mathbb{N}$.
Let $K$ be the compact convex subset of $X$ defined by
$$
K=\{(x_n)\in X: |x_n|\leq 1 \mbox{ for all } n\in\mathbb{N}\}.
$$
Let $\mathbb{N}_0$ be the additive discrete semigroup of nonnegative integers acting on $K$ by right shifts:
$$
(k.x)_j=0 \mbox{ for all } j\leq k, \quad \mbox{ and } (k.x)_j=x_{j-k} \mbox{ elsewhere}.
$$
Note that  any (necessarily two-sided) ideal $J$ of $\mathbb{N}_0$ assumes the form $J=s+\mathbb{N}_0$ for   $s=\min J$.	
	
		This action is not asymptotically uniformly  $Q$--nonexpan\-sive.
Indeed, consider any $x=(x_n), y=(y_n)$ in $K$ such that $|x_n-y_n|$ is strictly decreasing.
For any left ideal $J=s+\mathbb{N}_0$ of $\mathbb{N}_0$,  choosing $n>k>s$  we have
$$
q_n(k.x - k.y)  = | x_{n-k} - y_{n-k} | > | x_n - y_n | = q_n(x-y).
$$
	Thus the action is not asymptotically uniformly $Q$--nonexpan\-sive.
	
	On the other hand, this action is super asymptotically separately $Q$--nonexpan\-sive. Indeed, for any
  seminorm $q_n$, we choose the ideal $J=n+\mathbb{N}_0$.  Then for any $x$ in $K$ and any $t\in\mathbb{N}_0$, we have
$$
q_n((s+t).x - (s+t).y) = | ((s+t).x)_n - ((s+t).y)_n| = 0 \leq q_n(x-y),\quad\forall s\in J, \, \forall y\in K.
$$
\end{Example}

\begin{Theorem}\label{LCSsetting}
	Let $S$ be a right reversible semitopological semigroup and  $(X,Q)$ be a   locally convex space.
	\begin{enumerate}[(i)]

		\item   Assume $\operatorname{LUC}(S)$ has a $\operatorname{LIM}$. Then
		every jointly continuous super asymptotically separately $Q$--nonexpan\-sive action of $S$ on a compact convex subset $K$ of $X$ has a common fixed point.
		\item   Assume $\operatorname{AP}(S)$ has a $\operatorname{LIM}$. Then every separately continuous, equicontinuous and super asymptotically separately $Q$--nonexpan\-sive action of  $S$ on a compact convex subset $K$ of $X$ has a common fixed point.
		\item   Assume $\operatorname{WAP}(S)$ has a $\operatorname{LIM}$. Then every separately continuous, quasi-equicon\-tin\-uous and super asymptotically separately $Q$--nonexpan\-sive action of $S$
		on a compact convex subset $K$ of $X$  has a common fixed point.
	\end{enumerate}	
\end{Theorem}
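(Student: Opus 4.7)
The plan is to adapt the arguments from Theorem \ref{thm:main-Q} to the general locally convex space setting. Since we now have only one topology on $X$, namely the $Q$-topology, we do not need to juggle two topologies; the compactness of $K$ is already with respect to $Q$, so Lemma \ref{lem:3} (whose role was to produce compactness in a finer metric) is not needed here. First, I would apply Lemma \ref{lemma1} to obtain a minimal nonempty compact convex subset $L_0\subseteq K$ satisfying conditions $(\star 1)$ and $(\star 2)$, together with a minimal nonempty closed $S$-invariant subset $Y\subseteq L_0$.

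Next I would argue that $Y$ is in fact $S$-preserving, i.e.\ $s.Y=Y$ for all $s\in S$. For part (i) this follows directly from Lemma \ref{lemma2} applied with the topology of $X$ (that lemma only uses the compactness of $Y$ and joint continuity of the action, not any metric structure). For parts (ii) and (iii), the joint continuity is replaced by equicontinuity (resp.\ quasi-equicontinuity) together with a $\operatorname{LIM}$ on $\operatorname{AP}(S)$ (resp.\ $\operatorname{WAP}(S)$); in each case the $S$-preservation of $Y$ is known from \cite[Lemma 3.1]{Lau73Rocky} and \cite[Theorem 3.4]{LauZhang08}, whose proofs carry over verbatim because they rely only on the measure-theoretic support argument inside the compact convex set.

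Now suppose for contradiction that $Y$ contains more than one point. Since the family $Q$ separates points, there must be some $q\in Q$ with
\[
r:=\operatorname{diam}_q(Y)=\sup\{q(x-y): x,y\in Y\}>0.
\]
Applying the classical DeMarr Lemma \cite[Lemma 1]{DeMarr63} to the compact set $Y$ equipped with the seminorm $q$ yields a point $u\in\operatorname{conv}(Y)$ with $r_0:=\sup_{y\in Y}q(u-y)<r$. Fix $0<\varepsilon<r-r_0$ and define, for each $\Lambda_i$ in the collection given by Lemma \ref{lemma1},
\[
N_{\varepsilon,\Lambda_i}=\bigcap_{y\in Y}\{x\in\Lambda_i:q(x-y)\leq r_0+\varepsilon\},\qquad N_0=L_0\cap\bigcap_{y\in Y}\bar{B}_q[y,r_0+\varepsilon].
\]
Then $N_0$ is nonempty (it contains $u$), convex and compact; the continuity of $q$ ensures the closed balls are $Q$-closed, so $N_0$ inherits condition $(\star 1)$.

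The verification of $(\star 2)$ for $N_0$ is the key step and where super asymptotic separate $Q$-nonexpansiveness enters. Given $x\in N_0$ and an index $i$, pick a left ideal $I$ with $I.x\subseteq\Lambda_i$ (from $(\star 2)$ for $L_0$), and for any $t\in S$ pick the left ideal $I^{q,t}_{x}$ with $q(st.y-st.x)\leq q(y-x)$ for all $y\in K$ and $s\in I^{q,t}_{x}$. By right reversibility of $S$, choose $t_0\in\overline{I}\cap\overline{I^{q,t}_{x}\, t}$. Approximating $t_0$ by a net from $I^{q,t}_{x}\, t$ and using that $q$ is $\tau$-lower semicontinuous (indeed continuous here), we get $q(st_0.y-st_0.x)\leq r_0+\varepsilon$ for all $y\in Y$ and $s\in S$. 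Combined with the $S$-preservation $Y\subseteq st_0.Y$, this yields $q(y'-st_0.x)\leq r_0+\varepsilon$ for all $y'\in Y$, so the left ideal $J=St_0$ sends $x$ into $N_{\varepsilon,\Lambda_i}$. Thus $N_0$ satisfies both $(\star 1)$ and $(\star 2)$, and the minimality of $L_0$ forces $L_0=N_0$, whence $Y\subseteq\bigcap_{y\in Y}\bar{B}_q[y,r_0+\varepsilon]$ and $\operatorname{diam}_q(Y)\leq r_0+\varepsilon<r$, a contradiction. Therefore $Y$ is a singleton, giving the common fixed point. The main obstacle I anticipate is the adaptation of the $S$-preservation argument in parts (ii) and (iii) without a metric structure, but the cited AP/WAP results were already formulated in this generality.
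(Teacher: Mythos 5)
Your proposal is correct and follows essentially the same route as the paper, which proves this theorem by running the arguments of Theorems \ref{thm:main-Q} and \ref{thm:AP-Q} with the single $Q$--topology, noting exactly as you do that Lemma \ref{lem:3} becomes unnecessary because the $\tau$-- and $Q$--topologies coincide so $Y$ is automatically $Q$--compact. Your fleshed-out verification of $(\star 1)$ and $(\star 2)$ for $N_0$ and the seminorm version of DeMarr's lemma match the paper's intended argument.
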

\begin{proof}
The assertions follow from arguments similar to those in the proofs of Theorems \ref{thm:main-Q} and \ref{thm:AP-Q}.
Note that we do not need Lemma \ref{lem:3}, while its conclusion holds automatically as the $\tau$--topology coincides with the $Q$--topology
 in the current setting.
\end{proof}

The following result supplements Theorem \ref{ThmReversible} in the general locally convex space setting.

\begin{Theorem}\label{thm:LCS-Reversible}
	Let $S$ be a reversible semitopological semigroup and $(X,Q)$ be a  locally convex space. Then $S$ has the following fixed point property.
	\begin{quote}
		$\mathbf{(F^{\sup}_{sc})}$ Every super asymptotically separately $Q$--nonexpan\-sive and separately continuous action of $S$ on a compact convex subset $K$ of $X$ has a common fixed point.
	\end{quote}
\end{Theorem}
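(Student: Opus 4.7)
The plan is to mirror the argument of Theorem \ref{ThmReversible} in the locally convex space setting, replacing the $d$-nonexpansive inputs by their separately $Q$-nonexpansive analogues as in the proof of Theorem \ref{thm:main-Q}. Since $(X,Q)$ carries only a single topology, the interplay between $\tau$ and the metric $d$ exploited in the Fr\'{e}chet case collapses, and Lemma \ref{lem:3} is not needed.

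First, applying Lemma \ref{lemma1} to the right reversible $S$ acting separately continuously on the compact convex set $K$, I would obtain a minimal nonempty compact convex $L_0 \subseteq K$ satisfying $(\star 1)$ and $(\star 2)$, together with a minimal nonempty closed $S$-invariant $Y \subseteq L_0$. Since $S$ is also left reversible, Lemma \ref{left-reversibility}(i) then furnishes a nonempty closed $F \subseteq Y$ with $F \subseteq sF$ for every $s \in S$.

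Assume for contradiction that $F$ contains more than one point. Then some $q \in Q$ satisfies $r := \sup\{q(x-y) : x,y \in F\} > 0$, and the DeMarr-type argument (\cite[Lemma 1]{DeMarr63}, used in Lemma \ref{DemarrFrechet} and Theorem \ref{thm:main-Q}) yields $u \in \operatorname{conv}(F)$ with $r_0 := \sup_{y\in F} q(u-y) < r$. Pick $0 < \varepsilon < r - r_0$ and set
\[
N_{\varepsilon,\Lambda_i} = \bigcap_{y\in F} \{x\in \Lambda_i : q(x-y) \leq r_0 + \varepsilon\}, \qquad N_0 = L_0 \cap \bigcap_{y \in F} \bar{B}_q[y, r_0 + \varepsilon],
\]
which are nonempty (they contain $u$), compact, and convex. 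Once $N_0$ is shown to satisfy $(\star 1)$ and $(\star 2)$, the minimality of $L_0$ forces $L_0 = N_0$, hence $F \subseteq \bigcap_{y\in F} \bar{B}_q[y, r_0+\varepsilon]$, giving the contradiction $\sup\{q(x-y):x,y\in F\} \leq r_0 + \varepsilon < r$.

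To verify $(\star 2)$ for $N_0$, fix $x \in N_0$, an index $i$, and $t \in S$. Let $J_0$ be a left ideal with $J_0.x \subseteq \Lambda_i$ coming from $(\star 2)$ applied to $L_0$, and let $I^{q,t}_x$ be the left ideal from the super asymptotic separately $Q$-nonexpansiveness. Right reversibility yields $t_0 \in \overline{J_0} \cap \overline{I^{q,t}_{x} t}$; separate continuity together with closedness of $\Lambda_i$ gives $St_0.x \subseteq \Lambda_i$, while a net $s_\lambda t \to t_0$ with $s_\lambda \in I^{q,t}_x$, combined with the continuity of $q$, yields $q(st_0.y - st_0.x) \leq q(y-x)$ for all $y \in F$ and $s \in S$. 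Since $F \subseteq st_0.F$, every $y' \in F$ has the form $st_0.y$ for some $y \in F$, so $q(y' - st_0.x) \leq r_0+\varepsilon$ and hence $St_0.x \subseteq N_{\varepsilon,\Lambda_i}$. The main obstacle I expect is precisely this step: we have only the one-sided inclusion $F \subseteq sF$ rather than the equality enjoyed in the jointly continuous case, and the action is merely separately continuous; fortunately the diameter argument needs exactly this inclusion direction, so the proof goes through without upgrading to joint continuity or $S$-preservation.
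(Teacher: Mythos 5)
Your proposal is correct and follows essentially the route the paper intends (it leaves this proof implicit as the combination of Theorem \ref{ThmReversible} with the seminorm adaptation in Theorem \ref{thm:main-Q}): Lemma \ref{lemma1} plus Lemma \ref{left-reversibility}(i) to get $F\subset sF$, the seminorm version of DeMarr's lemma on the $Q$-compact set $F$, and the $N_0$-minimality contradiction, correctly observing that only the inclusion $F\subset st_0F$ is needed and that Lemma \ref{lem:3} is vacuous in the one-topology setting. The only cosmetic omission is the final step that $\operatorname{diam}_q F=0$ for every $q\in Q$ forces $F$ to be a singleton by Hausdorffness, whence $F\subset sF$ yields the common fixed point.
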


\begin{Remark}
We do not have the ``two topology $Q-\tau$ version'' of the above results ready.  The difficulty is that we need to assume
$(X,Q)$ to be metrizable to utilize \cite[Corollary 1.3]{Nami67}
in proving Lemma \ref{lem:3}.  Without Lemma \ref{lem:3} we do not know whether the $\tau$-compact set $Y$ is also $Q$--compact.
Thus we are not able to
apply Lemma \ref{DemarrFrechet}, or \cite[Lemma 1]{DeMarr63},
to conclude the existence of a common fixed point of the action.
\end{Remark}

Finally, we show that the notions of
asymptotic nonexpan\-siveness, super asymptotic nonexpan\-siveness and nonexpan\-siveness are strictly different.

\begin{Example}[{Based on \cite[Example]{HolLau71}}]\label{eg:1}
	Let $K=\left\lbrace (r,\theta): 0\leq r\leq 1, 0\leq \theta<2\pi\right\rbrace$ be the closed unit disk in $\mathbb{R}^2$
in polar coordinates and the usual Euclidean norm. Define continuous mappings $f, g$ from $K$ into $K$ such that
	\begin{equation*}
	f(r,\theta)=(r/2,\theta)\quad\text{and}\quad g(r,\theta)=(r,2\theta\ ({\operatorname{mod} 2\pi})).
	\end{equation*}
	Let $S$ be the discrete semigroup generated by $f$ and $g$ under composition.
Then $$
S=\left\lbrace f^ng^m: (m,n)\in \mathbb{N}_0\times\mathbb{N}_0\setminus\left\lbrace (0,0)\right\rbrace\right\rbrace,
$$
where $\mathbb{N}_0 =\{0,1,2,\ldots\}$.
Consider the action of $S$ on $K$  given by
$$
f^ng^m(r,\theta)=(\frac{r}{2^n},2^m\theta\; (\operatorname{mod} 2\pi)).
$$
This action is asymptotically nonexpan\-sive but not super asymptotically nonexpan\-sive (see \cite[Example 2.13]{MuoiWong2020}).
\end{Example}

\begin{Example}[Based on {\cite[Example 3.3(ii)]{AMN2016}}]\label{eg:2}
	 Let $K$ be the closed unit disk in $\mathbb{R}^2$.  Let $f$ be any continuous but not nonexpan\-sive function
from $[-1,1]$ into $[-1,1]$ such that $f(0)=0$. Consider the  map $T:K\to K$ defined by $T(x_1,x_2)=(f(x_2),0)$.
Then $T$ is not nonexpan\-sive and $T^n=0$ for all $n\geq 2$.
Define an action of the reversible discrete additive semigroup $\mathbb{N}$ on $M$ by $n.(x_1,x_2) = T^n(x_1,x_2)$.
It is plain that this action is super asymptotically nonexpan\-sive but not nonexpan\-sive.
\end{Example}

We see, however, in the following that
asymptotically nonexpan\-sive actions of a right reversible
compact  semitopological semigroup $S$ are automatically super asymptotically nonexpan\-sive.

\begin{Proposition}[Based on {\cite[Proposition 2.12(iii)]{MuoiWong2020}}]\label{remark210}
	Let $S$ be a  right reversible  compact semitopological semigroup.  Let $K$ be a subset of a locally convex space $(X,Q)$.
Then every separately continuous and asymptotically separately  (resp.\ uniformly) $Q$--nonexpan\-sive action of $S$ on    $K$
 is  super asymptotically separately (resp.\ uniformly) $Q$--nonexpan\-sive.
\end{Proposition}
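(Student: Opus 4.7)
The plan is to exploit compactness of $S$ together with right reversibility twice: once to produce a single closed left ideal on which the asymptotic inequality holds uniformly in $y \in K$, and once more to ``shift'' this ideal by the fixed element $t \in S$. I carry this out for the separate case; the uniform case is formally identical.

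Fix $x \in K$, $t \in S$, and $q \in Q$. For each $y \in K$, asymptotic separate $Q$-nonexpansiveness supplies a left ideal on which $q(s.x - s.y) \leq q(x-y)$. The separate continuity of the action together with continuity of $q$ allows me to pass to the closure, giving a closed left ideal $L_y$ on which the same inequality persists (via $s_\lambda.x \to s.x$ and $s_\lambda.y \to s.y$ for nets in $L_y$, and continuity of $q$).

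Next, I observe that any finite intersection of closed left ideals of $S$ is again a closed left ideal, and is nonempty by iterating right reversibility. Thus $\{L_y : y \in K\}$ has the finite intersection property in the compact space $S$, so $J := \bigcap_{y \in K} L_y$ is a nonempty closed left ideal on which $q(s.x - s.y) \leq q(x-y)$ holds uniformly in $y \in K$.

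Finally, set $I := \{s \in S : st \in J\}$. Since $s \mapsto st$ is continuous (semitopological property) and $J$ is closed, $I$ is closed; since $J$ is a left ideal, $(s's)t = s'(st) \in S \cdot J \subseteq J$ shows $I$ is also a left ideal. The crucial step is nonemptiness of $I$: the set $St$ is the continuous image of the compact $S$, hence compact (and closed in Hausdorff $S$), and is itself a left ideal, so $St \cap J \neq \emptyset$ by right reversibility, producing $s_0 \in S$ with $s_0 t \in J$, i.e., $s_0 \in I$. For any $s \in I$ and $y \in K$, $st \in J$ then yields $q(st.x - st.y) \leq q(x - y)$, establishing super asymptotic separate $Q$-nonexpansiveness. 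The uniform case proceeds by the same argument with $L_y$ taken to be the closure of the single ($q$-independent) left ideal provided by asymptotic uniform $Q$-nonexpansiveness. The main conceptual hurdle is recognizing that one cannot work with $J$ directly, and must introduce the auxiliary ideal $I = \{s : st \in J\}$; this is precisely where the second appeal to right reversibility, via $St \cap J \neq \emptyset$, becomes essential.
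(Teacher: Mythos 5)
Your proof is correct and follows essentially the same route as the paper's: take closures of the ideals via separate continuity, use right reversibility plus compactness to get a nonempty closed left ideal $J=\bigcap_{y\in K} I_y$, observe $St$ is a closed left ideal meeting $J$, and pass to $I^t=\{s\in S: st\in J\}$. The only difference is that you spell out a few details (the net argument for closures, the induction behind finite intersections) that the paper leaves implicit.
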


\begin{proof}
	Fix an $x\in K$, $t\in S$ and a seminorm $q\in Q$.
For each $y\in K$ there exists a left ideal $I_y$ of $S$ such that $q(s.x-s.y)\leq q(x-y)$ for all $s\in I_y$.
Since the action is separately continuous, we can assume $I_y$ is closed.
For each finite family of closed left ideals $\left\lbrace I_{y_1},\ldots,I_{y_n}\right\rbrace$ of $S$,
it follows from the right reversibility of $S$ that $\bigcap_{i=1}^n I_{y_i}\neq\emptyset$.
It follows from  the compactness of $S$ that
 $I=\bigcap_{y\in K}I_y$ is a nonempty closed left ideal of $S$.
 Clearly, $q(sx-sy)\leq q(x-y)$ for all $s\in I$ and $y\in K$. Since $S$
 is compact, $St$ is a closed left ideal of $S$, and
 thus $St\cap I\neq\emptyset$.
 Then $I^t=\left\lbrace s\in S: st\in I\right\rbrace$ is a nonempty left ideal of $S$. We have $q(st.x-st.y)\leq q(x-y)$ for all $s\in I^t$ and $y\in K$. In other words, the action is super asymptotically separately $Q$-nonexpan\-sive.
Finally, the uniform version follows similarly.
\end{proof}

We end this paper with an open problem for a converse of Theorem \ref{mainThm}, \ref{APWAP} and \ref{thm:main-Q}.

\begin{question}\label{question}
	Let  $S$ be a right reversible semitopological semigroup. Does the fixed point property stated in Theorems \ref{mainThm}, \ref{APWAP} or \ref{thm:main-Q} imply that $\operatorname{LUC(S)}$, $\operatorname{AP(S)}$ or $\operatorname{WAP(S)}$ has a $\operatorname{LIM}$, respectively?.
\end{question}

It is shown in \cite[Proposition 6.5]{LauZhang12} that $\operatorname{AP(S)}$ has a $\operatorname{LIM}$ if $S$ has the following fixed point property.
\begin{quote}
	$\mathbf{(F^{ne}_{jw*,sep})}$ Every norm nonexpansive and jointly weak* continuous action of $S$ on a nonempty norm separable and weak* compact convex subset of a Banach dual space has a common fixed point.
\end{quote}
Note that the fixed point property $\mathbf{(F^{\sup}_{jc,dsep})}$ in Theorem \ref{mainThm} is stronger than  $\mathbf{(F^{ne}_{jw*,sep})}$. Therefore, $\mathbf{(F^{\sup}_{jc,dsep})}$ implies the existence of a $\operatorname{LIM}$ of $\operatorname{AP(S)}$. In particular, the converse of Theorem \ref{mainThm} holds when  $S$ is compact, since $\operatorname{AP(S)}$ and $\operatorname{LUC(S)}$ coincide in this case.

\section*{Acknowledgement}

This research is supported by the Taiwan MOST grant 108-2115-M-110-004-MY2.

The authors would like to thank the referee for encouraging comments and useful suggestions.

\end{document}